\newtheorem{theorem}{Theorem}
\newtheorem{corollary}[theorem]{Corollary}
\newtheorem{lemma}[theorem]{Lemma}
\tikzstyle{vert}=[shape=circle,draw=black,fill=white, inner sep=.5mm]
\begin{document}
\title{On Meyniel extremal families of graphs}
\author[A.\ Bonato]{Anthony Bonato}
\author[R.\ Cushman]{Ryan Cushman}
\author[T.G.\ Marbach]{Trent G.\ Marbach}
\address[A1, A2, A3]{Department of Mathematics, Ryerson University, Toronto, Canada,  M5B 2K3.}

\email[A1]{(A1) abonato@ryerson.ca}
\email[A2]{(A2) ryan.cushman@ryerson.ca}
\email[A3]{(A3) trent.marbach@ryerson.ca}

\begin{abstract}
We provide new constructions of Meyniel extremal graphs, which are families of graphs with the conjectured largest asymptotic cop number.  Using spanning subgraphs, we prove that there are an exponential number of new Meyniel extremal families with specified degrees. Using a linear programming problem on hypergraphs, we explore the degrees in families that are not Meyniel extremal. We give the best-known upper bound on the cop number of vertex-transitive graphs with a prescribed degree. We find new Meyniel extremal families of regular graphs with large chromatic number, large diameter, and explore the connection between Meyniel extremal graphs and bipartite graphs. 
\end{abstract}

\keywords{cop number, Cops and Robbers, Meyniel's conjecture, graphs, hypergraphs}
\subjclass{05C57,05C35}

\maketitle

\section{Introduction} 

One of the most challenging directions in the study of the game of Cops and Robbers played on graphs is understanding how large the cop number can be as a function of the number of vertices of the graph. The definition of the game is given at the end of the introduction, and we denote the cop number of a graph $G$ by $c(G).$ \emph{Meyniel's conjecture} states that there is a constant $D>0$ such that for all connected graphs of order $n$, $c(G) \le D \sqrt{n}.$ Despite the known sublinear bounds on the cop number \cite{fkl,lu,ss}, the conjecture remains wide open; it is unknown if the \emph{soft Meyniel's conjecture} is true: there is a constant $D>0$ such that the cop number is bounded above by $Dn^{1-\epsilon},$ where $\epsilon >0.$ For more background on Meyniel's conjecture, see \cite{bairdbonato} and the book \cite{bonato}.

Families of graphs realizing the conjectured asymptotic upper bound are of interest in their own right. Let $I$ be an infinite set of positive integers, and let $\lbrace G_n \rbrace_{n\in I}$ be a family of graphs, where $G_n$ has order $n$. Note that $I$ may be a proper subset of the positive integers, such as the set of prime power integers. We say that $\lbrace G_n \rbrace_{n\in I}$ is a \emph{Meyniel extremal} family if there exists a positive constant $d$ such that for $G_n$ we have $c(G_n) \ge d \sqrt n$ for all $n \in I$. We sometimes abuse notation and refer to Meyniel extremal graphs.

The incidence graphs of projective planes form the earliest known example of a Meyniel extremal family, where in this case $I$ is the set of prime powers \cite{frankl}. Several other families of incidence graphs of combinatorial designs are known to be Meyniel extremal \cite{bb}, as are the incidence graphs of partial affine planes \cite{bairdbonato}. Other Meyniel extremal families include polarity graphs \cite{bb}, $t$-orbit graphs \cite{bb}, and certain families of Cayley graphs \cite{brad,hs}.

In the present work, we provide results that diversify the kind and quantity of known Meyniel extremal families. We provide new lower bounds on the cop number of a graph in Section~2, generalizing results in \cite{af,bb}. We consider spanning subgraphs in Section~3, and apply those results to give an exponential number of new Meyniel extremal families with specified degrees. In Section~4, we utilize a linear programming problem on hypergraphs to explore vertex degrees within families assuming Meyniel's conjecture is false. In Corollary~\ref{bbb}, the best-known upper bound on the cop number of vertex-transitive graphs with a prescribed degree is given. New constructions are given of Meyniel extremal families in Section~5; we find Meyniel extremal families of regular graphs with large chromatic number and large diameter. We consider bipartite double covers to explore the connection between Meyniel extremal families and bipartite graphs. The final section contains conjectures and open problems relating graphs in Meyniel extremal families to their degrees and cycle subgraphs.

To finish this introduction, we  give a brief overview of Cops and Robbers for those readers unfamiliar with the game. \emph{Cops and Robbers} is a game played on a graph $G$. There are two players consisting of a set of \emph{cops} and a single \emph{robber}. The game is played over a sequence of discrete time-steps or \emph{rounds} indexed by nonnegative integers, with the cops going first in round $0$. The cops and robber occupy vertices;
for simplicity, we often identify the player with the vertex they occupy. We refer to the set of cops as $C$ and the robber as $R.$ When a player is ready to move in a round, they must move to a neighboring vertex. Players can \emph{pass}, or remain on their own vertex. Any subset of $C$ may move in a given round.

The cops win the game if, after a finite number of rounds, one of them can occupy the same vertex as the robber.
This situation is called a \emph{capture}. The robber wins if they can evade capture indefinitely. The minimum number of cops required to win is a well-defined positive integer, called the \emph{cop number} of $G,$ written $c(G)$. For additional background on the cop number of a graph, see the book \cite{bonato}.

All graphs we consider are simple, finite, and undirected. We only consider connected graphs, unless otherwise stated. The diameter of a graph $G$ is denoted by $\mathrm{diam}(G)$. For a graph $H,$ a graph is $H$-\emph{free} if it does not contain $H$ as a subgraph. In the case of $H=K_3,$ we say the graph is \emph{triangle-free}. For a positive integer $k$ and a vertex $x$ in a graph $G$, let $N_k(x)$ be the set of vertices of distance $k$ to $x.$ We denote $N_1(x) = N(x),$ and refer to vertices in $N(x)$ as \emph{neighbors} of $x$. For a set $S$ of vertices, $N(S)$ is the set of neighbors of vertices in $S.$ For a vertex $x$ in a graph $G,$ we denote the degree of $x$ by $\mathrm{deg}_G(x);$ we drop the subscript $G$ if it clear from context. For a graph $G,$ let $\delta(G)$ and $\Delta(G)$ be the minimum and maximum degrees in $G,$ respectively. For background on graph theory, see \cite{west}. All logarithms are in base 2, unless otherwise stated.

\section{New lower bounds on the cop number}
We give generalizations of the lower bounds provided by Aigner and Fromme~\cite{af} and Bonato and Burgess~\cite{bb}. The following lemma will be useful in the next section.

\begin{lemma}\label{lem:newlb-K2t}
Let $n$ be a positive integer and $k$ a nonnegative integer with $n \ge k.$  If $G$ is $K_{2,t}$-free for $t\ge 1$ an integer and has $n-k$ vertices of degree at least $D$ and $k$ vertices of degree less than $D$, with $D > k$, then 
$$
c(G) \ge \frac{D-k}{t}.
$$
\end{lemma}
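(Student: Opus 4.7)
The plan is to exhibit a winning strategy for the robber whenever the number of cops $c$ is strictly less than $(D-k)/t$; this will force $c(G) \ge (D-k)/t$. The robber maintains the invariant that, at the start of every round, the robber sits at a vertex $v$ with $\deg(v) \ge D$ and every cop is at distance at least $2$ from $v$. To initialize, the robber picks any high-degree vertex not within distance $1$ of a cop, which exists unless the claimed bound is already vacuous. When the invariant holds at the start of a round, the cops' move leaves each cop at distance at least $1$ from $v$, so the robber is not captured on the cops' turn. If after this move no cop is adjacent to $v$, the robber stays and the invariant persists; otherwise the robber must move to a neighbor $v' \in N(v)$ of degree at least $D$ such that no cop is at $v'$ or adjacent to $v'$.

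The key counting step uses the $K_{2,t}$-free hypothesis to bound the number of vertices in $N(v)$ that are \emph{threatened} by the cops, where $w \in N(v)$ is threatened if some cop occupies $w$ or a neighbor of $w$. Fix a cop at vertex $u$. If $u \in N(v)$, the threatened vertices in $N(v)$ are exactly $\{u\} \cup (N(u) \cap N(v))$; since $u$ and $v$ together with any $t$ common neighbors would form a $K_{2,t}$, we have $|N(u) \cap N(v)| \le t-1$, so at most $t$ vertices in total. If $u$ lies at distance $2$ from $v$, only $N(u) \cap N(v)$ is threatened, which has size at most $t-1$; if $u$ is farther, no vertex of $N(v)$ is threatened. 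Hence the $c$ cops jointly threaten at most $ct$ vertices of $N(v)$. Since $G$ has only $k$ vertices of degree less than $D$, at most $k$ neighbors of $v$ are low-degree. Thus the number of safe high-degree neighbors of $v$ is at least $\deg(v) - ct - k \ge D - ct - k$, which is positive as soon as $c < (D-k)/t$. The robber can therefore always find a valid destination and maintains the invariant indefinitely, so $c$ cops cannot win.

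The main subtlety is isolating the correct count. By excluding $v$ itself from the robber's candidate destinations precisely in the forced-move case (since $v$ is already threatened by whichever cop sits in $N(v)$), one saves one unit per cop: each cop threatens at most $t$, rather than $t+1$, vertices of $N(v)$. This refinement is exactly what upgrades the naive $(D-k)/(t+1)$ bound to the sharper $(D-k)/t$. The remaining concerns — selecting a legal initial position and treating the degenerate regimes where the bound is vacuous — are routine bookkeeping.
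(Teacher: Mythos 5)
Your proposal is correct and follows essentially the same route as the paper's proof: restrict the robber to the high-degree vertices, discard at most $k$ low-degree neighbors, and use $K_{2,t}$-freeness to show each cop can block (occupy or be adjacent to) at most $t$ of the remaining $D-k$ neighbors, so $|C|t < D-k$ leaves an escape. The one point where you are too quick is initialization: the existence of a high-degree vertex at distance at least $2$ from every cop is not a matter of the bound being ``vacuous,'' and needs an argument, since a priori few cops could dominate all of $G_D$; the paper fixes this by first choosing a cop-free vertex $u'$ of degree at least $D$ (possible because $|C| < D-k \le n-k$) and then applying your own neighbor-counting step to $N(u')$ to find a safe starting vertex --- a one-line repair you should make explicit.
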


\begin{proof}
Define $G_D$ to be the subgraph induced by the $n-k$ vertices of degree at least $D$, and let $G_k$ be the subgraph induced by the remaining vertices. Suppose that $C$ is the set of cops, and suppose that $|C|<\frac{D-k}{t}$. 

Consider a robber strategy where the robber only moves on vertices in $G_D$, and on any given robber move, will move to any neighboring vertex that is not adjacent to a cop. Suppose the robber is on vertex $u$ on the robber's turn, which we assume does not contain a cop. There are at most $k$ neighbors of $u$ in $G_k$ that the robber refuses to move to, leaving at least $D-k$ neighbors that the robber can move to. Each cop can be adjacent to at most $t-1$ vertices among these $D-k$ vertices and can be on another one vertex among these, and so there are at most $|C|t$ vertices among the $D-k$ vertices that the robber cannot move to. As $|C|t < D-k$, then there is always some vertex in the neighborhood of $u$ that is both in $G_D$ and is not adjacent to any cop. 

This shows that the robber can escape capture, given that the robber is not captured in the first round. 
To see the later, after the cops have chosen their starting positions, let $u'$ be any vertex of $G_D$ that does not contain a cop. By the above analysis, $u'$ has a neighbor in $G_D$ that is not adjacent to a cop, and the robber starts on such a vertex.  
\end{proof}

We have the following consequence of Lemma~\ref{lem:newlb-K2t}, first proven in \cite{bb}.

\begin{corollary}\label{cor22}
Let $t \ge 1$ be an integer. If $G$ is $K_{2,t}$-free, then $c(G) \ge \delta(G)/t$.
\end{corollary}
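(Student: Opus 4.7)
The plan is to derive Corollary~\ref{cor22} as an immediate special case of Lemma~\ref{lem:newlb-K2t} by choosing the parameters $k$ and $D$ appropriately. Specifically, I would set $k = 0$ and $D = \delta(G)$, so that every vertex of $G$ qualifies as being ``of degree at least $D$,'' and there are no vertices of degree less than $D$.

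The verification of hypotheses is routine. The graph $G$ is $K_{2,t}$-free by assumption. With $k = 0$, the requirement that $G$ has $n - k = n$ vertices of degree at least $D = \delta(G)$ is exactly the definition of the minimum degree. The condition $D > k$ reduces to $\delta(G) > 0$, which is automatic since we only consider connected graphs of order at least $2$ (and the statement is vacuous for a single vertex, where $c(G) = 1 \ge 0 = \delta(G)/t$). The inequality supplied by Lemma~\ref{lem:newlb-K2t} then reads
$$
c(G) \;\ge\; \frac{D - k}{t} \;=\; \frac{\delta(G)}{t},
$$
which is the desired conclusion.

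There is no real obstacle here: the entire content of the corollary is packaged inside the lemma, and the role of the corollary is to isolate the classical $K_{2,t}$-free bound as the $k = 0$ specialization. The only point that merits a brief remark in the write-up is confirming that the hypothesis $D > k$ is satisfied in this specialization, which it is as long as $\delta(G) \ge 1$.
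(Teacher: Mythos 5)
Your proposal is correct and is exactly how the paper obtains this corollary: it is stated as an immediate consequence of Lemma~\ref{lem:newlb-K2t} with $k=0$ and $D=\delta(G)$, just as you specialize it. Your brief check that $D>k$ holds (i.e., $\delta(G)\ge 1$ for connected graphs on at least two vertices) is a reasonable extra remark but not a point of divergence.
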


A Meyniel extremal family $\lbrace G_n \rbrace_{n\in I}$ only requires some constant $d$ such that $c(G_n) \ge d \sqrt{n}$ for all $n\in I$, so we will not require the extra precision provided by the following extension of \cite{af}. However, we include it as a potentially helpful result in other contexts.  

\begin{lemma}\label{lem:newlb-girth}
Let $n$ be a positive integer and $k$ a nonnegative integer with $n \ge k.$ If $G$ has girth at least $5$ and has $n-k$ vertices of degree at least $D$ and $k$ vertices of degree less than $D$, with $D > k$, then 
$$
c(G) \ge D-k.
$$
\end{lemma}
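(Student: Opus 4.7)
The plan is to mirror the proof of Lemma~\ref{lem:newlb-K2t} line-for-line, simply replacing the $K_{2,t}$-free hypothesis with the girth condition to obtain the sharper blocking bound of one vertex per cop (instead of $t$). Partition $V(G)$ as before: let $G_D$ be the subgraph induced by the $n-k$ vertices of degree at least $D$, and let $G_k$ be induced by the remaining $k$ vertices. Suppose for contradiction that there is a set of cops $C$ with $|C| < D - k$, and let the robber follow the strategy of staying on $G_D$, on each turn moving to a neighbor in $G_D$ that is neither occupied by nor adjacent to a cop.

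The key step is to verify that, from any vertex $u \in G_D$ unoccupied by a cop, such a safe neighbor exists. Since $\deg_G(u) \geq D$ and at most $k$ neighbors of $u$ lie in $G_k$, there are at least $D - k$ candidate neighbors in $G_D$. It then suffices to show that each cop forbids at most one of these candidates. Let a cop sit on a vertex $w$. If $w \notin N(u)$ and $w$ were adjacent to two distinct neighbors $v_1, v_2 \in N(u)$, then $u, v_1, w, v_2, u$ would be a $4$-cycle, contradicting girth at least $5$. If $w \in N(u)$ and $w$ were adjacent to another neighbor $v \in N(u)$, then $u, w, v, u$ would be a triangle, again contradicting the girth assumption. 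Hence each cop blocks at most one vertex of $N(u) \cap V(G_D)$, and since $|C| < D - k$, the robber has an uncovered neighbor in $G_D$ to move to.

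As in Lemma~\ref{lem:newlb-K2t}, the initial round requires a brief separate treatment: after the cops choose starting positions, select any vertex $u' \in G_D$ not occupied by a cop, and apply the same counting argument to find an uncovered vertex in $N(u') \cap V(G_D)$ on which the robber can start. The main (mild) obstacle is merely the case split between $w \in N(u)$ and $w \notin N(u)$ in the blocking argument; both cases are forced by the girth hypothesis, and no further ideas beyond the Aigner--Fromme style reasoning of \cite{af} are needed.
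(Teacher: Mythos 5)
Your proof is correct and takes essentially the same approach as the paper's: partition the vertices into $G_D$ and $G_k$, keep the robber in $G_D$, and use girth at least $5$ (no triangles and no $C_4$'s) to argue that each cop blocks at most one of the at least $D-k$ candidate neighbors, so $D-k$ cops are needed. The only difference is cosmetic: the paper handles round $0$ via a contradiction argument assuming $C$ dominates $G_D$, whereas you reuse the one-step counting argument directly (as the paper itself does for the $K_{2,t}$-free version), which is simpler and equally valid.
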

\begin{proof}
Define $G_D$ to be the subgraph induced by the set of $n-k$ vertices of degree at least $D$, and let $G_k$ be the subgraph induced by the remaining vertices. Suppose that $C$ is the set of cops and that $|C| = D-k-1$. 
We first show that the robber can choose a vertex $u$ that is not adjacent to any cop and is in $G_D$. Suppose to the contrary that $C$ is a dominating set for $G_D$. Define $X$ to be the set of $x$ neighbors of $u$ adjacent to $C \cap V(G_D)$ and $X_k$ to be the set of $x_k$ neighbors of $u$ adjacent to $C \cap V(G_k)$.
Similarly, let $Y$ be the $y$ neighbors of $u$ in $V(G_D)\setminus C$ and let $Y_k$ be the $y_k$ neighbors of $u$ in $V(G_k)\setminus C$; see Figure~\ref{fig:lb}.

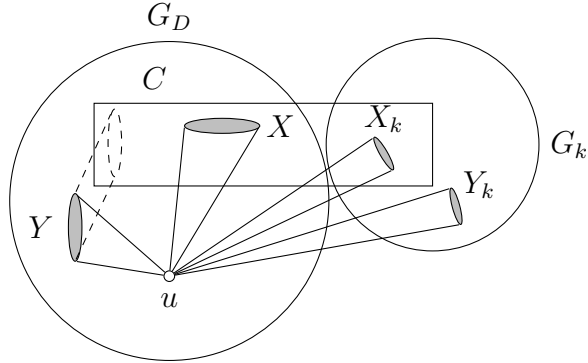
\begin{figure}[h]\label{fig:lb}
\begin{center}
\begin{tikzpicture}  

\node[inner sep=0mm] (r1) at (-1,.2) {};
\node[inner sep=0mm] (r2) at (3.5, 1.3) {};
\draw[black, fill=white, label={$C$}] (r1) rectangle (r2) {};
\node[label={\color{black}$C$}] at (-.2,1.2) {};

\node [shape=circle,draw=black, inner sep=10mm, label=right:{$G_k$}] at (3.5, .75) {};

\node [shape=circle,draw=black, inner sep=15mm, label={$G_D$}] at (0, 0) {};

\node[vert, label=below:$u$] (u) at (0,-1) {};

\begin{scope}[xshift=20]
\node[label=right:{\color{black}$X$}] at (.3,1) {};
\node[inner sep=0mm] (x1) at (.5,1) {};
\node[inner sep=0mm] (x2) at (-.5,1) {};
\draw[black, fill=black!25] (x1) arc (0:360:.5 and .1);
\draw (u) -- (x1);
\draw (u) -- (x2);
\end{scope}

\begin{scope}[xshift=5,yshift=22, scale=.9,rotate=90, dashed]
\node[inner sep=0mm] (ys1) at (.5,1) {};
\node[inner sep=0mm] (ys3) at (.45,1.05) {};
\node[inner sep=0mm] (ys2) at (-.5,1) {};
\draw[black,] (ys1) arc (0:360:.5 and .1);
\end{scope}

\begin{scope}[rotate=90, yshift=10, xshift=-10, scale=.9]
\node[label=left:{\color{black}$Y$}] at (0,1) {};
\node[inner sep=0mm] (y1) at (.5,1) {};
\node[inner sep=0mm] (y2) at (-.5,1) {};
\draw (u) -- (y1);
\draw (u) -- (y2);
\draw[black, fill=black!25] (y1) arc (0:360:.5 and .1);

\end{scope}

\begin{scope}[rotate=-60, yshift= 65,xshift=25, scale=.5]
\node[label={\color{black}$X_k$}] at (0,1) {};
\node[inner sep=0mm] (xk1) at (.5,1) {};
\node[inner sep=0mm] (xk2) at (-.5,1) {};
\draw[black, fill=black!25] (xk1) arc (0:360:.5 and .1);
\draw (u) -- (xk1);
\draw (u) -- (xk2);
\end{scope}

\begin{scope}[rotate=-75, yshift= 90,xshift =30, scale=.5]
\node[label=above:{\color{black}$Y_k$}] at (.5,1.5) {};
\node[inner sep=0mm] (yk1) at (.5,1) {};
\node[inner sep=0mm] (yk2) at (-.5,1) {};
\draw[black, fill=black!25] (yk1) arc (0:360:.5 and .1);
\draw (u) -- (yk1);
\draw (u) -- (yk2);
\end{scope}

\draw[black, dashed] (y1) -- (ys3);
\draw [black, dashed] (y2) -- (ys2);

\end{tikzpicture}
\end{center}
\caption{Sets considered in the proof of Lemma~\ref{lem:newlb-girth}.}
\end{figure}

Since the sets $X,$ $Y,$ and $V(G_k)$ are all disjoint, we obtain that
$$x + y + k \ge   x + y + y_k + x_k = d(u)\ge D.$$
Since $C$ is a dominating set for $G_D$, then there exists vertices of $C$ that dominate $Y$. However, $G$ is $C_4$-free and triangle-free, so there is a unique vertex in $C$ that dominates each vertex of $Y$. Also, since $G$ is triangle-free, none of the vertices in $X$ can be adjacent to vertices in $Y$. Hence, we have that
$$
D-k - 1 = |C| \ge x + y \ge D-k,
$$
a contradiction. Hence, the robber can pick a vertex in round 0.

Now suppose that some number of rounds have been played and the robber occupies a vertex $v$ in $G_D$ that is not adjacent to any cop $C$. Since $G$ is $C_4$-free, each vertex of $C$ can be adjacent to at most one neighbor of $v$. In addition, at most $k$ neighbors $v$ are in $G_k$. Hence, there are at least $D - (D-k-1) - k = 1$ vertices in $G_D$ that are not adjacent to any vertex in $C$. By induction, the robber can stay in $G_D$ indefinitely and evade capture. 
\end{proof}

Taking $k = 0$ yields the classic result of \cite{af}.

\begin{corollary}\label{cor11}
If $G$ has girth at least $5$, then $c(G) \ge \delta(G)$.
\end{corollary}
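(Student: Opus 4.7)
The plan is to derive this immediately as a specialization of Lemma~\ref{lem:newlb-girth}. Since the lemma already handles the case of mixed degrees with $k$ low-degree exceptions, setting $k = 0$ should collapse everything to the classical Aigner--Fromme statement. I would begin by observing that if $\delta(G) = 0$ the bound is vacuous, so we may assume $\delta(G) \ge 1$; then we choose $D := \delta(G)$, which by definition means every one of the $n$ vertices of $G$ has degree at least $D$.

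Next I would verify the hypotheses of Lemma~\ref{lem:newlb-girth} with this choice. With $k = 0$, there are $n - k = n$ vertices of degree at least $D$ and $k = 0$ vertices of degree less than $D$, and the condition $D > k$ becomes $\delta(G) > 0$, which we have just ensured. The girth hypothesis is inherited directly from $G$. Applying the lemma yields $c(G) \ge D - k = \delta(G)$, which is exactly the claim.

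There is essentially no obstacle here beyond bookkeeping: the only edge case worth noting is the degenerate possibility $\delta(G) = 0$, which cannot happen for a connected graph on at least two vertices, and for the single-vertex graph the inequality $c(G) \ge 0$ holds trivially. So the proof is a one-line invocation of Lemma~\ref{lem:newlb-girth} together with this small remark.
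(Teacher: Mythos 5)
Your proposal is exactly the paper's argument: the corollary is stated immediately after Lemma~\ref{lem:newlb-girth} with the remark that taking $k=0$ yields the classical result of Aigner and Fromme, which is precisely your specialization $D=\delta(G)$, $k=0$. Your extra care about the degenerate case $\delta(G)=0$ is harmless bookkeeping that the paper omits.
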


\section{Meyniel extremal families and spanning subgraphs}\label{sec:spanning}
Many of the known constructions that result in Meyniel extremal families contain vertices with degrees in $\Theta(\sqrt{n})$ and forbid $K_{2,t}$ for integer constant $t \ge 1$. We therefore arrive at the following definition.

A graph family $\lbrace G_n \rbrace_{n\in I}$ is \emph{elementary} if the following properties hold for all $n\in I$.
\begin{enumerate}
\item The graph $G_n$ is $K_{2,t}$-free for $t \ge 1$ an integer constant. 
\item All vertices of $G_n$ have degree in $\Theta(\sqrt{n}).$
\end{enumerate}

In this section, we consider new Meyniel extremal families that are generated by considering subgraphs of members of elementary Meyniel extremal families. This will result in many families that are not elementary Meyniel extremal and many families whose members are nonisomorphic. To make this precise, we define two families $\lbrace G_n \rbrace_{n\in I}$ and $\lbrace J_n\rbrace_{n\in I}$ with the same index set $I$ to be \textit{nonisomorphic} if for each $n \in I$, there is no isomorphism between $G_n$ and $J_n$. Further, we say that the family $\lbrace J_n\rbrace_{n\in I}$ is a \textit{spanning family} of  $\lbrace G_n \rbrace_{n\in I}$ if $J_n$ is a spanning subgraph of $G_n$ for all $n\in I$. 

For a positive integer $r$, an $r$-\emph{factor} of a graph $G$ is a spanning $r$-regular subgraph of $G$, and an $r$-\emph{factorization} of $G$ partitions the edges of $G$ into disjoint $r$-factors.

\begin{lemma}\label{lem:fact}
Fix $0 < \varepsilon < 1$ and let $k$ be a positive-integer. Let $\lbrace G_n \rbrace_{n\in I}$ be a $k$-regular elementary Meyniel extremal family with an $r$-factorization, where $1\le r < k$. We then have that for each $1 \le i \le \lfloor \varepsilon k /r \rfloor$ there exists a family of Meyniel extremal graphs $\lbrace G_{i,n}\rbrace_{n\in I}$ such that $G_{i,n}$ is $(k-ri)$-regular. 
\end{lemma}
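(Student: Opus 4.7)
The plan is to build each $G_{i,n}$ by deleting $i$ of the $r$-factors in the given $r$-factorization of $G_n$, and then to show that the resulting spanning subgraph retains enough structure to invoke Corollary~\ref{cor22}. First, I would observe that removing a single $r$-factor from a $d$-regular graph uniformly lowers every degree by $r$; iterating this $i$ times starting from $G_n$ yields a spanning subgraph $G_{i,n}$ of $G_n$ that is $(k-ri)$-regular. The hypothesis $i \le \lfloor \varepsilon k / r \rfloor$ with $\varepsilon < 1$ guarantees $k - ri \ge (1-\varepsilon)k > 0$, so in particular $G_{i,n}$ is a well-defined graph with positive minimum degree.

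Second, I would transfer the lower bound on the cop number from $G_n$ to $G_{i,n}$ by appealing to Corollary~\ref{cor22}. The property of being $K_{2,t}$-free is preserved under edge deletion, and $G_n$ is $K_{2,t}$-free by the elementary hypothesis, so $G_{i,n}$ is $K_{2,t}$-free as well. Applying Corollary~\ref{cor22} to $G_{i,n}$ then gives
$$
c(G_{i,n}) \ge \frac{\delta(G_{i,n})}{t} = \frac{k-ri}{t}.
$$

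Third, I would convert this into the desired $\sqrt n$-type bound. Since $\{G_n\}_{n\in I}$ is elementary, every vertex of $G_n$ has degree in $\Theta(\sqrt n)$, and $k$-regularity forces $k = \Theta(\sqrt n)$. Combining with $k - ri \ge (1-\varepsilon)k$ yields
$$
c(G_{i,n}) \ge \frac{(1-\varepsilon)k}{t} = \Omega(\sqrt n),
$$
so $\{G_{i,n}\}_{n\in I}$ is Meyniel extremal. I do not anticipate a serious obstacle: the lemma is essentially the monotonicity of $K_{2,t}$-freeness under edge deletion combined with Corollary~\ref{cor22}. The sole role of $\varepsilon$ is to keep $i$ bounded away from $k/r$ so that $k-ri$ remains a fixed positive fraction of $k$; without such a restriction, the bound on $c(G_{i,n})$ could fall below $\Omega(\sqrt n)$, which is the one place where the elementary hypothesis is used in an essential way.
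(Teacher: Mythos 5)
Your proposal is correct and follows essentially the same route as the paper: delete $i$ of the $r$-factors to obtain a $(k-ri)$-regular spanning subgraph, note that $K_{2,t}$-freeness is inherited, and apply Corollary~\ref{cor22} together with $k-ri\ge(1-\varepsilon)k=\Theta(\sqrt{n})$. Your explicit remark on the role of $\varepsilon$ matches the paper's use of the bound $k-ri\ge k(1-\varepsilon)$.
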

\begin{proof}
Fix an $r$-factorization $\lbrace F_1, F_2, \ldots, F_k \rbrace$ of $G_n$.  For all $1 \le i \le \lfloor \varepsilon k/r \rfloor$, define $G_{i, n}$ to have the vertex set $V(G)$ and the edge set 
$$
E(G_{i,n}) = E(G_n) \setminus \left( \bigcup_{j=1}^{i} E(F_i) \right).
$$
Each vertex of $G_n$ loses $ri$ adjacent edges, so $G_{i,n}$ is $(k - ri)$-regular and is $K_{2,t}$-free. Further, we have that $$k - ri \ge k(1-\epsilon) \ge d_1 \sqrt{n},$$
for some constant $d_1$. 
Hence, the family $\lbrace G_{i,n} \rbrace_{n\ge 1}$ is elementary Meyniel extremal by Corollary~\ref{cor22} and its members are $(k - i)$-regular. 
\end{proof}

Lemma~\ref{lem:fact} can be applied to families of regular bipartite graphs and families of regular graphs with even degree, which contain 1- and 2-factorizations, respectively. For example, the family consisting of the incidence graphs of the projective plane for each prime power can be used as a starting elementary Meyniel extremal family. 

Lemma~\ref{lem:newlb-K2t} allows us to find families of Meyniel extremal graphs using spanning subgraphs.

\begin{lemma}\label{lem:subgraph}
Let $\lbrace G_n \rbrace_{n\in I}$ be an elementary Meyniel extremal family, let $k$ be a nonnegative integer, and let $J_n$ be a connected spanning subgraph of $G_n$ that contains at least $n-k$ vertices of degree at least $D$ where $D-k = \Theta(\sqrt n)$. We then have that $\lbrace J_n \rbrace_{n\in I}$ is a Meyniel extremal family. 
\end{lemma}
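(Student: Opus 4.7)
The plan is to apply Lemma~\ref{lem:newlb-K2t} directly to each $J_n$, using that $K_{2,t}$-freeness is inherited by subgraphs. First I would observe that, since $\{G_n\}_{n\in I}$ is elementary, there is a fixed integer $t \ge 1$ such that every $G_n$ is $K_{2,t}$-free. Because $J_n$ is a spanning subgraph of $G_n$, any copy of $K_{2,t}$ in $J_n$ would also appear in $G_n$; hence $J_n$ is $K_{2,t}$-free with the same constant $t$. This is the only place the elementary hypothesis is used.

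Next, I would apply Lemma~\ref{lem:newlb-K2t} to $J_n$. Letting $k'$ denote the actual number of vertices of $J_n$ of degree less than $D$, the hypothesis yields $k' \le k$. Since $D - k = \Theta(\sqrt n)$, for all sufficiently large $n \in I$ we have $D > k \ge k'$, so the lemma applies and gives
$$
c(J_n) \ge \frac{D - k'}{t} \ge \frac{D - k}{t}.
$$
The $\Theta(\sqrt n)$ assumption supplies a constant $c_0 > 0$ with $D - k \ge c_0\sqrt n$ for all but finitely many $n \in I$, so $c(J_n) \ge (c_0/t)\sqrt n$ on a cofinite subset of $I$.

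For the finitely many remaining indices, I would use that $J_n$ is connected, hence $c(J_n) \ge 1$, and simply shrink the leading constant: there exists $d > 0$ so that $c(J_n) \ge d\sqrt n$ for every $n \in I$. This verifies the definition of a Meyniel extremal family. The only subtlety worth flagging is the uniformity of $t$ across the family and the fact that the condition $D > k$ of Lemma~\ref{lem:newlb-K2t} need only hold asymptotically; both are immediate consequences of the elementary hypothesis and the $\Theta(\sqrt n)$ assumption, so there is no substantial obstacle in the argument.
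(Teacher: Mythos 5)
Your proof is correct and follows essentially the same route as the paper's: apply Lemma~\ref{lem:newlb-K2t} to $J_n$, using the constant $t$ from the elementary hypothesis and the assumption $D-k=\Theta(\sqrt n)$. You are in fact more careful than the paper's one-line argument, which leaves implicit both the inheritance of $K_{2,t}$-freeness by subgraphs and the treatment of the finitely many indices where $D>k$ might fail.
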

\begin{proof}
Since $\lbrace G_n \rbrace_{n\ge 1}$ is an elementary Meyniel extremal family, we may apply Lemma~\ref{lem:newlb-K2t} to obtain that $$c(J_n) \ge (D - k)/{t} \ge d\sqrt n/t$$ for some constant $d$ (recall by the definition of an elementary Meyniel extremal family, $t$ is constant). 
\end{proof}

We have the following corollary.

\begin{corollary}\label{cor:vec}
Fix $0 < \varepsilon < 1$. Let $\lbrace G_n \rbrace_{n\in I}$ be a family of $C_4$-free graphs and let the degrees of $G_n$ be $\Theta(\sqrt n)$ and $\delta_n = \delta(G_n) $. For each $\mathbf x = (x_i)_{i=1}^{\lceil\varepsilon \delta_n\rceil}$, there exists a Meyniel extremal family $\lbrace G_{\mathbf{x}, n}\rbrace_{n\in I}$ with the following properties: 
\begin{enumerate}
\item $G_{\mathbf{x}, n}$ is a spanning subgraph of $G_n$; 
\item $G_{\mathbf{x}, n}$ contains vertices $v_i \in V(G_n)$ with $0 \le x_i \le \mathrm{deg}_{G_n}(v_i)-3$; and
\item  $G_{\mathbf{x},n}$ has
$
\mathrm{deg}_{G_{\mathbf{x},n}}(v_i) = \mathrm{deg}_{G_n}(v_i) - x_i
$
for $1 \le i \le \lceil \varepsilon \delta_n\rceil$.  

\end{enumerate}
\end{corollary}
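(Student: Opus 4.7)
My plan is to construct each $G_{\mathbf{x}, n}$ by deleting $x_i$ edges at each $v_i$ from $G_n$, then apply Lemma~\ref{lem:subgraph} to the resulting family. First I would observe that $\{G_n\}_{n\in I}$ is an elementary Meyniel extremal family with $t=2$: being $C_4$-free is being $K_{2,2}$-free, the degree condition matches the elementary definition, and Corollary~\ref{cor22} supplies $c(G_n) \ge \delta_n/2 \in \Omega(\sqrt n)$.

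Set $k = \lceil \varepsilon \delta_n \rceil$. Given specified vertices $v_1, \ldots, v_k$ and the vector $\mathbf{x}$, I would construct $G_{\mathbf{x}, n}$ by processing the indices $i$ in order and, at step $i$, deleting $x_i$ edges currently incident to $v_i$ in the running subgraph. The bound $x_i \le \deg_{G_n}(v_i) - 3$ guarantees that at least three edges survive at $v_i$ and that deletable edges are always available, and to maintain connectedness I would at each step select non-bridge edges, which exist in abundance since $\deg_{G_n}(v_i) \in \Theta(\sqrt n)$ is comfortably above the number of deletions at $v_i$. By construction $\deg_{G_{\mathbf{x}, n}}(v_i) = \deg_{G_n}(v_i) - x_i$, giving properties (1)--(3) of the statement, and $G_{\mathbf{x}, n}$ inherits $K_{2,2}$-freeness from $G_n$ as a subgraph.

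It remains to verify the degree-profile hypothesis of Lemma~\ref{lem:subgraph}. Designate $\{v_1, \ldots, v_k\}$ as the $k$ low-degree vertices. Any other vertex $u$ can lose at most one edge per $v_i$ adjacent to it, hence at most $k$ edges in total, so $\deg_{G_{\mathbf{x}, n}}(u) \ge \delta_n - k$. Taking $D = \delta_n - k$ gives $D - k = \delta_n - 2k = (1-2\varepsilon)\delta_n + O(1) \in \Theta(\sqrt n)$ when $\varepsilon < 1/2$, and Lemma~\ref{lem:subgraph} then yields $c(G_{\mathbf{x}, n}) \in \Omega(\sqrt n)$, so $\{G_{\mathbf{x}, n}\}_{n\in I}$ is Meyniel extremal. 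The main technical obstacle I foresee is reaching the full range $0 < \varepsilon < 1$ stated in the corollary: the trivial loss bound above is tight only for $\varepsilon < 1/2$. To close the gap one should delete with priority, for instance always preferring edges $v_iv_j$ inside $\{v_1,\ldots,v_k\}$ (each such deletion fills one unit of both $x_i$ and $x_j$ at no cost to any non-$v_i$ vertex), then controlling the external deletions so that only $O(\sqrt n)$ non-$v_i$ vertices fall below the threshold and can be absorbed into the low-degree set of Lemma~\ref{lem:subgraph}.
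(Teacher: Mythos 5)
There is a genuine gap, and you have already put your finger on it: your construction deletes the $x_i$ edges at arbitrarily placed vertices $v_i$, so the only control you have over the damage to other vertices is the trivial bound that a vertex $u$ loses at most one edge per adjacent $v_i$, hence at most $k$ edges in total. This forces $D-k = \delta_n - 2k$ and confines you to $\varepsilon < 1/2$, while the statement claims all $0<\varepsilon<1$. The patch you sketch does not close this. Edges inside $\{v_1,\dots,v_k\}$ are scarce in a $C_4$-free graph, and the averaging idea of absorbing the over-damaged external vertices into the exceptional set of Lemma~\ref{lem:subgraph} backfires: lowering the threshold to $D=\delta_n - c\sqrt n$ admits up to $\Theta(n)/(c\sqrt n)$ additional exceptional vertices, and since both $D$ and the exceptional count enter the quantity $D-k'$ with opposite signs, optimizing over $c$ only certifies the result for $\varepsilon$ below roughly $3-2\sqrt 2$, which is worse than $1/2$. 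Your connectivity argument is also shaky: ``non-bridge edges exist in abundance'' is not automatic, since a vertex of a connected graph can have many (even all) of its incident edges be bridges.

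The missing idea, which is how the paper obtains the full range $0<\varepsilon<1$, is to localize the construction. Take $v$ of minimum degree and choose all the $v_i$ inside $N_1(v)$. By $C_4$-freeness, distinct vertices of $N_1(v)$ have no common neighbor in $N_2(v)$ and each has at most one neighbor inside $N_1(v)$, so each $v_i$ has at least $\deg_{G_n}(v_i)-2$ \emph{private} neighbors in $N_2(v)$; the $x_i$ deleted edges are taken only from among these. Then every vertex outside $\{v_1,\dots,v_k\}$ loses at most one edge (a vertex of $N_2(v)$ adjacent to two of the $v_i$ would create a $C_4$ through $v$), so one may take $D=\delta_n-1$ and $k=\lceil\varepsilon\delta_n\rceil$ in Lemma~\ref{lem:subgraph}, giving $D-k \ge (1-\varepsilon)\delta_n - O(1) = \Theta(\sqrt n)$ for every $\varepsilon<1$. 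Connectivity also comes for free: the hypothesis $x_i \le \deg_{G_n}(v_i)-3$ guarantees each $v_i$ keeps an edge into $N_2(v)$ (this is exactly why the statement has $-3$ rather than $-2$), and no edges outside $S\times N_2(v)$ are touched.
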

\begin{proof}
Fix $\mathbf x = (x_i)_{i=1}^{\lceil \varepsilon \delta_n\rceil}$. Choose a vertex $v\in V(G_n)$ of minimum degree. Note that since $G_n$ is $C_4$-free, every pair of vertices in $N_1(v)$ have no common neighbors in $N_2(v)$. To see this, if two vertices $x,y\in N_1(v)$ have a common neighbor $z$ in $N_2(v)$, then we obtain a subgraph $X$ isomorphic to $C_4$ with edges $vx$, $xz$, $zy$, and $yv$.  
In addition, each vertex $x\in N_1(v)$ has at most one neighbor in $N_1(v)$. Otherwise, if $y,z \in N_1(v)$ are both neighbors of $x$, then $X$ violates the $C_4$-free property. Hence, each of the $\delta_n$ vertices in $N_1(v)$ has at least $\mathrm{deg}_{G_n}(v_i)-2$ unique neighbors in $N_2(v)$.

Now choose $\lceil \varepsilon \delta_n\rceil$ vertices from $S \subseteq N_1(v)$ and label them $v_1, v_2, \ldots, v_{\lceil \varepsilon \delta_n \rceil}$ and obtain $G_{\mathbf{x},n}$ from $G_n$ by deleting $r_i$ edges incident with $v_i$ and in $N_2(v)$. The graph $G_{\mathbf{x},n}$ is connected since the only edges removed are between $S$ and $N_2(v)$ and each vertex in $S$ has degree at least $1$ into $N_2(v)$. Thus, $\lbrace G_{\mathbf{x},n} \rbrace_{n\in I}$ is Meyniel extremal by Lemma~\ref{lem:subgraph} and contains vertices of the desired degree.
\end{proof}

Notice that Corollary~\ref{cor:vec} applies to any elementary Meyniel extremal graphs that have graphs with girth at least $5$. One application of Lemma~\ref{lem:subgraph} is to construct large families of nonisomorphic, Meyniel extremal, spanning families using a similar approach to the proof of Corollary~\ref{cor:vec}. We will require the following lemma. 
\begin{lemma}\label{lem:H}
Let $J = (A,B)$ be a bipartite graph where each of the $|A|=a$ vertices of $A$ has degree at least $d$ and shares no common neighbors, with $d \le a$. Let $X$ count the number of distinct spanning subgraphs of $J.$ We then have that 
$$
X \ge \binom{a+d-1}{d-1} .
$$
If $a(n) = \Theta(d(n))$, then for $H(x)=-x \log_2 x - (1-x) \log_2 (1-x)$ the binary entropy function, we have that
$$
X \sim 2^{ (a+d-1)H((d-1)/(a+d-1))}.
$$

\end{lemma}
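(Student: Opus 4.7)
The approach is to parameterize spanning subgraphs of $J$ by the multiset of degrees the $A$-vertices attain in the subgraph. Because no two vertices of $A$ share a neighbor, the neighborhoods $N(v_1), \ldots, N(v_a)$ are pairwise disjoint subsets of $B$, so $J$ decomposes as a vertex-disjoint union of stars centered at the $v_i$ together with a (possibly empty) set of isolated $B$-vertices. Consequently any spanning subgraph is determined up to isomorphism by the multiset of resulting star-sizes, namely the multiset of degrees of the $A$-vertices.

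For the lower bound I plan to exhibit an injection from size-$a$ multisets with entries in $\{0, 1, \ldots, d-1\}$ into nonisomorphic spanning subgraphs of $J$. Given such a multiset $M = \{m_1 \le \cdots \le m_a\}$, I fix an arbitrary labeling $v_1, \ldots, v_a$ of $A$ and, for each $i$, retain exactly $m_i$ of the (at least $d$) edges incident with $v_i$ and delete the rest. The resulting spanning subgraph $J_M$ has $A$-degree multiset equal to $M$; since every vertex of $B$ has degree at most one in $J$, the side $A$ is effectively pinned down by degree considerations, so the $A$-degree multiset is an isomorphism invariant of the construction. A stars-and-bars count then produces $\binom{a+d-1}{d-1}$ such multisets, giving the stated bound on $X$.

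For the asymptotic, I would apply Stirling's approximation with $n = a+d-1$ and $k = d-1$ to obtain
$$
\binom{a+d-1}{d-1} = \frac{2^{(a+d-1)\, H((d-1)/(a+d-1))}}{\sqrt{2\pi (a+d-1)\cdot \tfrac{d-1}{a+d-1}\cdot \tfrac{a}{a+d-1}}}\,(1+o(1)).
$$
Under the hypothesis $a(n) = \Theta(d(n))$, the argument $(d-1)/(a+d-1)$ stays in a fixed closed subinterval of $(0,1)$, so the entropy factor is of order $\Theta(a+d)$ in the exponent while the polynomial denominator is subexponential; this gives the claimed asymptotic. The main subtle step will be rigorously justifying nonisomorphism at boundary multisets where some $m_i = 0$, in which case $v_i$ becomes isolated and could \emph{a priori} be confused with an isolated $B$-vertex. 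I would handle this either by restricting the multiset entries to $\{1, \ldots, d\}$ (which changes the count only by a constant factor and hence is invisible to the asymptotic), or by observing that the total number of isolated vertices on each side is still recoverable from the unlabeled subgraph together with $|V(J)|$.
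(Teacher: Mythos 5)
Your proposal is correct and takes essentially the same approach as the paper: both parameterize the spanning subgraphs by the multiset of degrees of the $A$-vertices, count these multisets by stars and bars to get $\binom{a+d-1}{d-1}$, and then apply Stirling's approximation to express the binomial coefficient in entropy form. The only cosmetic difference is that the paper first trims each $A$-vertex to degree exactly $d$ and then deletes $x_i \in \{0,1,\ldots,d-1\}$ of its edges, so the resulting degrees lie in $\{1,\ldots,d\}$ and the isolated-vertex subtlety you flag at the end never arises.
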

\begin{proof}
Label the vertices of $A$ as $\lbrace v_1, v_2, \ldots, v_a \rbrace$. For each $v_i \in A$, delete edges incident to $v_i$ until it has exactly $d$ neighbors. Define for each vector $\mathbf x = (x_i)_{i=1}^a\in \lbrace 0, 1, 2,\ldots  d-1\rbrace^a$ the graph $J_{\mathbf x}$ obtained by deleting $x_i$ edges incident with $v_i$.

Define  $g_i(\mathbf x)$ be the number of coordinates of $\mathbf x$ that contain $i$, for $0 \le i \le d-1$. For two vectors $\mathbf{x, y}$ if there exists $i$ such that $g_i(\mathbf x) \not= g_i(\mathbf y)$, then $J_{\mathbf x}$ and $J_{\mathbf y}$ are nonisomorphic since they will have different degree sequences. Hence, the number of nonisomorphic spanning subgraphs of $J$ is at least 
$$
\binom{a+d-1}{d-1} 
$$
since this is the number of different ways to distribute $a$ coordinates among the $d$ variables $g_0, g_1, \ldots, g_{d-1}$.

Since $a = \Theta(d)$, we may estimate this binomial coefficient using Stirling's approximation to achieve: 

$$
\binom{a+d-1}{d-1} \sim \sqrt{\frac{a+d-1}{2\pi (d-1)a}} \left(\frac{a+d-1}{d-1}\right)^{d-1} \left(\frac{a+d-1}{a}\right)^{a}. 
$$
Taking the logarithm yields
$$
\log_2\binom{a+d-1}{d-1} \sim H\left(\frac{d-1}{a+d-1}\right)(a+d-1),
$$
where $H(x) = -x \log_2 x - (1-x) \log_2 (1-x)$ for $x\in (0, 1)$. Notice that since 
$$\beta(n) = \frac{d-1}{a+d-1} = 1 - \frac{1}{1 + \frac{d}{a}-\frac{1}{a}}$$
 has $0<\beta\le{1}/{2}$, then we obtain
$$
\binom{a+d-1}{d-1} \sim 2^{(a+d-1)H((d-1)/(a+d-1))}.
$$
The proof follows.
\end{proof}

\begin{theorem}\label{thm:spanning5}
Fix $0 <\varepsilon < 1$.
Let $G_n$ be a graph on $n$ vertices with girth at least $5$ that has degrees in $\Theta(\sqrt{n})$ and let $\delta_n =\delta(G_n)$. We then have that there exists $r$ pairwise nonisorphic, Meyniel extremal, spanning families of $\lbrace G_n \rbrace_{n \in I}$, where 
$$r \ge \binom{(1+\varepsilon)\delta_n-2}{\delta_n-2} \sim 2^{((1+\varepsilon)\delta_n-2)H(1/(1+\varepsilon))}$$ 
and $H(x)$ is the binary entropy function.
\end{theorem}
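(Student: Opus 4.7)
The plan is to construct many spanning subgraphs of $G_n$ by selectively deleting edges inside a bipartite subgraph exposed at a minimum-degree vertex by the girth-$5$ structure, to count them using Lemma~\ref{lem:H}, and to certify Meyniel extremality using Lemma~\ref{lem:subgraph}. Since $G_n$ has girth at least $5$ (hence is $K_{2,2}$-free) and all degrees in $\Theta(\sqrt{n})$, the family $\{G_n\}_{n \in I}$ is elementary Meyniel extremal with $t = 2$, and $c(G_n) \ge \delta_n$ by Corollary~\ref{cor11}.

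First, I would fix $v \in V(G_n)$ with $\deg(v) = \delta_n$. The girth-$5$ hypothesis forces $N_1(v)$ to be an independent set (otherwise a triangle through $v$) and forces distinct vertices of $N_1(v)$ to have no common neighbor in $N_2(v)$ (otherwise a $C_4$ through $v$). Choose any $A \subseteq N_1(v)$ with $|A| = \lceil \varepsilon \delta_n \rceil$ and set $B = N(A) \cap N_2(v)$. In the bipartite subgraph $J$ on $A \cup B$, each $v_i \in A$ has $\deg_{G_n}(v_i) - 1 \ge \delta_n - 1$ neighbors in $B$ (subtracting only the edge to $v$, since $N_1(v)$ is independent), and distinct vertices in $A$ have disjoint neighborhoods in $B$. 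Applying Lemma~\ref{lem:H} to $J$ with $a = \lceil \varepsilon \delta_n \rceil$ and $d = \delta_n - 1$ produces at least
\[
\binom{a+d-1}{d-1} \;\ge\; \binom{(1+\varepsilon)\delta_n - 2}{\delta_n - 2} \;\sim\; 2^{((1+\varepsilon)\delta_n - 2)\,H(1/(1+\varepsilon))}
\]
pairwise nonisomorphic spanning subgraphs of $J$, the asymptotic following from Stirling's approximation as in Lemma~\ref{lem:H} (using the symmetry $\binom{N}{K}=\binom{N}{N-K}$ together with $H(\beta)=H(1-\beta)$).

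For each such spanning subgraph $J_{\mathbf{x}}$ of $J$, let $J_n^{(\mathbf{x})}$ be the spanning subgraph of $G_n$ obtained by keeping every edge of $G_n$ outside of $J$ together with the edges of $J_{\mathbf x}$. After the initial reduction inside Lemma~\ref{lem:H}, each $v_i \in A$ has degree $\delta_n - x_i$ in $J_n^{(\mathbf{x})}$, while the $n - \lceil \varepsilon \delta_n \rceil$ vertices outside $A$ retain their $G_n$-degree of at least $\delta_n$. Hence Lemma~\ref{lem:subgraph} with $D = \delta_n$ and $k = \lceil \varepsilon \delta_n \rceil$ gives $c(J_n^{(\mathbf{x})}) \ge (D - k)/2 = \Theta(\sqrt{n})$, so $\{J_n^{(\mathbf{x})}\}_{n \in I}$ is a Meyniel extremal spanning family of $\{G_n\}_{n \in I}$. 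Pairwise nonisomorphism of the full graphs transfers from Lemma~\ref{lem:H}: distinct compositions $(g_0, \ldots, g_{d-1})$ yield distinct multisets $\{\delta_n - x_i\}$ of degrees on $A$, and since the graph outside $J$ is identical across all choices of $\mathbf{x}$, the global degree sequences also differ.

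The step I expect to require the most care is confirming that each $J_n^{(\mathbf{x})}$ is connected, which is needed to invoke Lemma~\ref{lem:subgraph}. Each $v_i \in A$ retains its edge to $v$ plus at least one edge into $B$, so $A$ stays connected to $v$; the concern is that some $u \in B$ whose unique $A$-neighbor (unique by $C_4$-freeness) has its edge deleted could be severed. However, $u$ still has $\Theta(\sqrt{n}) - 1$ other neighbors in $G_n$, and only $O(n)$ edges are deleted from a graph with $\Theta(n\sqrt{n})$ edges in total. Connectivity can then be enforced either by restricting to the asymptotically dominant set of compositions that preserve connectivity, or by a direct path argument through $N_2(v) \cup N_3(v)$; neither alters the stated lower bound on $r$.
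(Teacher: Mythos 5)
Your proposal follows essentially the same route as the paper: pick a minimum-degree vertex $v$, use girth $5$ to get an independent $N_1(v)$ with pairwise disjoint neighborhoods in $N_2(v)$, apply Lemma~\ref{lem:H} with $a=\lceil\varepsilon\delta_n\rceil$ and $d=\delta_n-1$, and certify Meyniel extremality via Lemma~\ref{lem:subgraph}. If anything you are more careful than the paper, which simply asserts connectivity of the resulting spanning subgraphs where you flag and address the possibility that a vertex of $N_2(v)$ loses its unique edge into $N_1(v)$.
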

\begin{proof}
Let $v$ be a vertex of minimum degree in $G_n$ and choose $\lceil \varepsilon \delta_n \rceil$ vertices $S \subseteq N_1(v)$  and label them $v_1, \ldots, v_{\lceil \varepsilon \delta_n \rceil}$. Notice that the $v_i$ have at least $\delta_n -1$ unique neighbors in $N_2(v)$. Ignoring the edges in $N_2(v)$ gives us the bipartite graph $J$ with parts $S$ and $N_2(v)$ in Lemma~\ref{lem:H} with parameters $d = \delta_n-1$ and $a = \lceil \varepsilon \delta_n \rceil$. Hence, define $\lbrace J_{k,n} \rbrace_{k=1}^r$ as the $r$ spanning subgraphs of $G_n$ guaranteed by Lemma~\ref{lem:H}. In addition, we have that $r$ is at least 
$$\binom{(1+\varepsilon)\delta_n-2}{\delta_n-2} \sim 2^{((1+\varepsilon)\delta_n-2)H(1/(1+\varepsilon))}$$ 
since
$$
\frac{d-1}{a+d-1} \sim \frac{1}{1+\varepsilon}.
$$
Furthermore, since each subgraph is connected and has at most $\lceil \varepsilon\delta_n \rceil$ many vertices with degree $o(\sqrt{n})$ and $\delta_n - \varepsilon \delta_n = \Theta(\sqrt n)$, Lemma~\ref{lem:subgraph} implies that $\lbrace J_{k,n}\rbrace_{n\in I}$ is a Meyniel extremal family for each $1\le k \le r$. 
\end{proof}

\begin{theorem}\label{thm:spanning4}
Fix $0 <\varepsilon < 1$. If $G_n$ is a $C_4$-free graph with degrees in $\Theta(\sqrt{n})$ and $\delta_n =\delta(G_n)$, then there exists $r$ pairwise nonisorphic, Meyniel extremal, spanning families of $\lbrace G_n \rbrace_{n \in I}$, where 
$$r \ge \binom{(1+\varepsilon)\delta_n-3}{\delta_n-3} \sim 2^{((1+\varepsilon)\delta_n-3)H(1/(1+\varepsilon))}$$ 
and $H(x)$ is the binary entropy function.
\end{theorem}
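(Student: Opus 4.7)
The plan is to parallel the proof of Theorem~\ref{thm:spanning5}, using the weaker $C_4$-free hypothesis in place of girth at least $5$. The only structural change is that allowing triangles costs us one more edge per vertex when counting unique second-neighborhood edges. Concretely, I would let $v$ be a vertex of minimum degree in $G_n$ and choose $\lceil \varepsilon \delta_n \rceil$ vertices $S = \{v_1, \ldots, v_{\lceil \varepsilon \delta_n \rceil}\} \subseteq N_1(v)$. The $C_4$-free property implies, exactly as in the proof of Corollary~\ref{cor:vec}, two things: first, each $v_i \in S$ has at most one neighbor in $N_1(v)$ (else a $C_4$ through $v$ appears); and second, distinct $v_i, v_j \in S$ share no common neighbor in $N_2(v)$ (else a $C_4$ through $v$ appears). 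Consequently each $v_i$ contributes at least $\delta_n - 2$ neighbors in $N_2(v)$, and these neighborhoods are pairwise disjoint.

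Next I would package this into the bipartite graph $J$ with parts $S$ and $N_2(v)$ (keeping only edges between these two sets), which satisfies the hypotheses of Lemma~\ref{lem:H} with parameters $a = \lceil \varepsilon \delta_n \rceil$ and $d = \delta_n - 2$. Applying that lemma produces at least
\[
\binom{a + d - 1}{d - 1} \;=\; \binom{\lceil \varepsilon \delta_n \rceil + \delta_n - 3}{\delta_n - 3} \;\ge\; \binom{(1+\varepsilon)\delta_n - 3}{\delta_n - 3}
\]
pairwise nonisomorphic spanning subgraphs $J_{k,n}$ of $J$, with the asymptotic estimate following from $(d-1)/(a+d-1) \sim 1/(1+\varepsilon)$ and the Stirling computation already carried out in Lemma~\ref{lem:H}. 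Each $J_{k,n}$ extends to a spanning subgraph of $G_n$ by keeping all other edges of $G_n$ intact; distinct degree multisets on $S$ force nonisomorphism globally.

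To invoke Lemma~\ref{lem:subgraph}, I need each extended spanning subgraph to be connected and to have at most $\lceil \varepsilon \delta_n \rceil$ vertices of degree $o(\sqrt{n})$, with the remaining vertices still of degree $\Theta(\sqrt n)$. Connectivity follows because we only remove edges between $S$ and $N_2(v)$, and each $v_i$ retains at least one edge into $N_2(v)$ (we delete at most $d-1$ of its $\ge d$ such edges); the vertices in $N_2(v)$ whose only $N_1(v)$-edges went to $S$ can be kept connected by restricting attention to deletions in Lemma~\ref{lem:H} that leave every vertex of $N_2(v)$ with some surviving incident edge, which only changes the count by a subexponential factor and preserves the asymptotics. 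Since only vertices in $S$ lose edges, and $|S| = \lceil \varepsilon \delta_n \rceil$, while $\delta_n - (\delta_n - 2) \cdot \varepsilon$-type degree losses at every $v_i$ still leave $v_i$ with at least $\delta_n - (d-1) = 1$ surviving cross-edge plus all its other edges in $G_n$, the degrees elsewhere remain $\Theta(\sqrt n)$. Then Lemma~\ref{lem:subgraph} applied with $k = \lceil \varepsilon \delta_n \rceil$ and $D - k = \Theta(\sqrt n)$ (coming from $\delta_n(1 - \varepsilon) = \Theta(\sqrt n)$) yields that each $\{J_{k,n}\}_{n \in I}$ is Meyniel extremal.

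The main obstacle is the connectivity and degree bookkeeping: I must verify that the spanning subgraphs produced by Lemma~\ref{lem:H} can be taken to be connected when extended to all of $G_n$, without damaging the counting estimate. This is delicate because in principle a vertex $w \in N_2(v)$ could have all its neighbors in $N_1(v)$ lying in $S$, and an unlucky deletion could isolate $w$ from $v$ inside $J$. I expect the fix is that $w$ retains its edges outside $S \cup \{v\}$ in the larger graph $G_n$ (preserving global connectivity via alternative paths), or alternatively, to restrict to the subfamily of spanning subgraphs of $J$ in which each $w \in N_2(v) \cap N(S)$ keeps at least one incident edge, a restriction that drops the count only by a polynomial factor and therefore preserves the asymptotic $2^{((1+\varepsilon)\delta_n - 3) H(1/(1+\varepsilon))}$ lower bound.
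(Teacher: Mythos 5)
Your proposal matches the paper's proof essentially step for step: pick a minimum-degree vertex $v$, use $C_4$-freeness to get $\delta_n-2$ pairwise-disjoint second-neighborhoods for the chosen $v_i\in N_1(v)$, feed the resulting bipartite graph into Lemma~\ref{lem:H} with $a=\lceil\varepsilon\delta_n\rceil$ and $d=\delta_n-2$, and finish with Lemma~\ref{lem:subgraph}. Your connectivity worry is in fact moot, since the same $C_4$-free disjointness shows each $w\in N_2(v)$ is adjacent to at most one vertex of $S$ and so loses at most one edge, while each $v_i$ keeps its edge to $v$; no restriction of the count is needed.
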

\begin{proof}
Let $v$ be a vertex of maximum degree in $G_n$ and label $N_1(v) = \lbrace v_1, \ldots, v_{\lceil \varepsilon\delta_n \rceil} \rbrace$. As in the proof of Corollary~\ref{cor:vec}, we may make $N_1(v)$ an independent set, choose the $\lceil \varepsilon \delta_n \rceil$ from $N_1(v)$ and label them $v_i$. Each $v_i$ have at least $\delta_n -2$ unique neighbors in $N_2(v)$. We then use Lemma~\ref{lem:H} with parameters $d = \delta_n-2$ and $a = \lceil \varepsilon \delta_n \rceil.$ Following a similar application of Lemma~\ref{lem:subgraph} as in the proof of Theorem~\ref{thm:spanning5}, we have that $G_n$ contains spanning subgraphs $\lbrace J_{k,n}\rbrace_{k=1}^r$ where 
$$ r \ge \binom{(1+\varepsilon)\delta_n-3}{\delta_n-3} \sim 2^{((1+\varepsilon)\delta_n-3)H(1/(1+\varepsilon))}$$ 
and each family $\lbrace J_{k,n}\rbrace_{n\in I}$ is a Meyniel extremal family. As in Theorem~\ref{thm:spanning5}, we have that
$$
\frac{d-1}{a+d-1} \sim \frac{1}{1+\varepsilon}
$$
The proof follows.
\end{proof}

In both of these proofs, we use Lemma~\ref{lem:H} to focus solely on edges between the first and second neighborhoods of a vertex of minimum degree. Although this is a local approach, it allows us to conclude that if two starting families are nonisomorphic due to some global structure, such as the presence of triangles, then all of the $2r$ families guaranteed by Theorem~\ref{thm:spanning5} and \ref{thm:spanning4} will be pairwise nonisomorphic. More generally, if two families are nonisomorphic due to some structure that avoids the first and second neighborhood of a minimum degree vertex (suitably chosen for each corresponding member), then the $2r$ resulting families will be pairwise nonisomorphic. 

For example, take some $C_4$-free, bipartite, Meyniel extremal family $\lbrace G_n \rbrace_{n \in I}$. Each of the families $\lbrace G_{i,n} \rbrace_{n\in I}$ given by Lemma~\ref{lem:fact} are nonisomorphic due to their graphs being $d$-regular for different $d$. In addition, each family satisfies the conditions of Theorem~\ref{thm:spanning4}. Thus, all of the families generated by taking $\lbrace G_{i,n} \rbrace_{n\in I}$ as a starting family will be pairwise nonisomophic. 

These observations apply to the following two corollaries, whose starting families are nonisomorphic over the presence of triangles. 

\begin{corollary}
Fix $0 <\varepsilon < 1$ and let $I$ be the set of prime powers. If $G_q$ is the incidence graph of the projective plane on $2(q^2 + q + 1)$ vertices for a prime power $q$, then there exists $r$ pairwise nonisomorphic, Meyniel extremal families of spanning subgraphs of $\lbrace G_q \rbrace_{q \in I}$, where $ r \ge(1+o(1)) 2^{((1+\varepsilon)(q+1)-2)H(1/(1+\varepsilon))}$
for $H(x)$ the binary entropy function.
\end{corollary}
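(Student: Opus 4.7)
The plan is to obtain the corollary as a direct application of Theorem~\ref{thm:spanning5} to the family $\lbrace G_q \rbrace_{q\in I}$. First, I would verify that the incidence graph of the projective plane of order $q$ satisfies the hypotheses. Such a graph has $n = 2(q^2+q+1)$ vertices (one per point and one per line) and is $(q+1)$-regular, since each point lies on exactly $q+1$ lines and each line contains exactly $q+1$ points. Therefore $\delta_n = q+1 = \Theta(\sqrt{n})$. It is well-known that the incidence graph of a projective plane has girth $6$: any two distinct points lie on a unique common line, which forbids $C_4$, and bipartiteness rules out triangles. In particular the girth is at least $5$, as required.

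Next, I would invoke Theorem~\ref{thm:spanning5} with $\delta_n = q+1$. The theorem immediately supplies $r$ pairwise nonisomorphic, Meyniel extremal, spanning families of $\lbrace G_q \rbrace_{q\in I}$ with
\[
r \ge \binom{(1+\varepsilon)(q+1)-2}{(q+1)-2} \sim 2^{((1+\varepsilon)(q+1)-2)H(1/(1+\varepsilon))}.
\]
The asymptotic equivalence here encapsulates the $1+o(1)$ factor coming from the Stirling estimate inside Lemma~\ref{lem:H}, which is exactly the $(1+o(1))$ factor recorded in the corollary.

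There is essentially no obstacle beyond this verification, so the hardest part is really just checking that nothing is lost in specializing the general theorem. One subtle point worth stating explicitly is that the $r$ families produced by Theorem~\ref{thm:spanning5} remain pairwise nonisomorphic \emph{as families} (i.e.\ for every sufficiently large $q$, not just for a single $q$): the spanning subgraphs are distinguished by their degree sequences in a local neighborhood of a fixed minimum-degree vertex, so the distinguishing invariant is uniform in $q$. The discussion immediately preceding the corollary reinforces this, since $G_q$ is bipartite (hence triangle-free), and the deletions used in Lemma~\ref{lem:H} only touch the first and second neighborhood of that chosen vertex, leaving the global structure intact.
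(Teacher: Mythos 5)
Your proposal is correct and matches the paper's proof, which likewise just notes that $G_q$ is $(q+1)$-regular (hence has girth at least $5$ and degrees $\Theta(\sqrt{n})$) and applies Theorem~\ref{thm:spanning5} directly. Your additional remarks on the $(1+o(1))$ factor and the uniformity of the nonisomorphism across $q$ are sound elaborations of what the paper leaves implicit.
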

\begin{proof}
The graph $G_q$ is regular with vertices of degree $q+1$. Therefore, Theorem~\ref{thm:spanning5} implies the result.
\end{proof}

\begin{corollary}
Fix $0 < \varepsilon < 1$ and let $I$ be the set of prime powers. Let $G_q$ be a graph that has order $q^2 + q + 1$, each vertex has degree $q+1$ or $q$, is $C_4$-free, and has diameter~$2$. There then exists $r$ pairwise nonisomorphic, Meyniel extremal, spanning families of $ \lbrace G_q \rbrace_{q\in I}$, where $ r \ge (1+o(1))2^{((1+\varepsilon)q-3)H(1/(1+\varepsilon))}$
for $H(x)$ the binary entropy function.
\end{corollary}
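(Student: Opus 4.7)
The plan is to show that the family $\{G_q\}_{q\in I}$ satisfies the hypotheses of Theorem~\ref{thm:spanning4} and then invoke that theorem directly. With $n = q^2 + q + 1$, one has $\sqrt{n} = \Theta(q)$, so the assumption that every vertex has degree $q$ or $q+1$ places all degrees in $\Theta(\sqrt{n})$, and in particular the minimum degree satisfies $\delta_q \ge q$. Combined with the $C_4$-free hypothesis, the family meets the premises of Theorem~\ref{thm:spanning4}.

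Applying that theorem then produces
$$r \ge \binom{(1+\varepsilon)\delta_q - 3}{\delta_q - 3} \ge \binom{(1+\varepsilon)q - 3}{q - 3} \sim 2^{((1+\varepsilon)q - 3)\, H(1/(1+\varepsilon))}$$
pairwise nonisomorphic Meyniel extremal spanning families of $\{G_q\}_{q \in I}$, where the second inequality uses that the lower bound is nondecreasing in $\delta_q$ (as made evident by its exponential asymptotic form) and the asymptotic equivalence is precisely the Stirling-based estimate established in Lemma~\ref{lem:H}. The $(1+o(1))$ factor appearing in the statement simply encodes this asymptotic equivalence.

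There is no substantial obstacle here since the argument reduces to a single application of Theorem~\ref{thm:spanning4}. The only subtlety worth flagging is that the diameter-2 hypothesis is not used by the theorem at all; its purpose in the statement is to pin down the class of graphs under consideration (reminiscent of polarity graphs of projective planes), rather than to feed into the cop-number lower bound, which depends only on $C_4$-freeness and the degree condition. Correspondingly, as noted in the remark preceding this corollary, the resulting families remain pairwise nonisomorphic even across different starting families from Lemma~\ref{lem:fact}, because the presence of triangles in $G_q$ is a global property preserved by the local edge deletions used in the proof of Theorem~\ref{thm:spanning4}.
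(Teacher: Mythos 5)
Your proposal is correct and follows essentially the same route as the paper: verify that $\{G_q\}$ is $C_4$-free with all degrees in $\Theta(\sqrt{n})$ (since $n=q^2+q+1$ gives $\sqrt{n}=\Theta(q)$) and apply Theorem~\ref{thm:spanning4} directly, with the $(1+o(1))$ factor absorbing the passage from $\delta_q\in\{q,q+1\}$ to $q$. The paper's proof additionally cites Theorem~3.1 of~\cite{bb} to note that the starting family is itself Meyniel extremal (which is where the diameter-$2$ hypothesis enters), but as you correctly observe this is not needed for the hypotheses of Theorem~\ref{thm:spanning4}.
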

\begin{proof}
By Theorem~3.1 in~\cite{bb}, the family $\lbrace G_q \rbrace_{q}$ is Meyniel extremal. Theorem~\ref{thm:spanning4} yields the result.
\end{proof}

In light of the observations above, the following corollary allows us to take a starting family and generate nonisomorphic, families of spanning subgraphs that have a specified number of triangles. This is done such that families with different number of triangles will produce pairwise nonisomorphic families. 

\begin{corollary}
Fix $0 <\varepsilon < 1$. Let $G_n$ be a $C_4$-free graph with $t_n$ triangles and degrees in $\Theta(\sqrt n)$. Let $\delta_n =\delta(G_n)$. There then exists $r$ pairwise nonisomorphic, Meyniel extremal, spanning families of $\lbrace G_n \rbrace_{n \in I}$, where 
$$r \ge \binom{(1+\varepsilon)\delta_n-3}{\delta_n-3} \sim 2^{ ((1+\varepsilon)\delta_n-3)H(1/(1+\varepsilon))},$$ 
the function $H(x)$ is the binary entropy function, and each member of the new families has exactly $t_n'$ triangles, with $0 \le t_n' \le t_n$.
\end{corollary}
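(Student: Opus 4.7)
The plan is to combine Theorem~\ref{thm:spanning4} with a preliminary triangle-reduction step that brings the triangle count of the host graph down from $t_n$ to the target $t_n'$. The key structural observation that makes this clean is that in a $C_4$-free graph any two vertices share at most one common neighbor, so each edge lies in at most one triangle, and the $t_n$ triangles of $G_n$ are pairwise edge-disjoint. First I would choose any $t_n - t_n'$ of these triangles and for each delete one of its three edges, producing a spanning subgraph $G_n^*$ of $G_n$ with exactly $t_n'$ triangles. Since each vertex lies in at most $\mathrm{deg}(v)/2 = O(\sqrt n)$ triangles (the neighborhood of any vertex in a $C_4$-free graph induces a matching), a balancing argument, greedy or random, over the three edge-choices per triangle ensures that no vertex loses more than a constant fraction of its edges. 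Consequently, $G_n^*$ is $C_4$-free with all degrees still in $\Theta(\sqrt n)$.

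Second, I would apply Theorem~\ref{thm:spanning4} to $G_n^*$. That theorem immediately produces $r \ge \binom{(1+\varepsilon)\delta_n - 3}{\delta_n - 3}$ pairwise nonisomorphic, Meyniel extremal, spanning families of $\lbrace G_n^* \rbrace_{n \in I}$, which are a fortiori spanning families of $\lbrace G_n \rbrace_{n \in I}$. The one subtlety is that the Lemma~\ref{lem:H} step inside the proof of Theorem~\ref{thm:spanning4} deletes edges between a set $S \subseteq N_1(v)$ and $N_2(v)$, and such deletions could destroy some of the $t_n'$ triangles of $G_n^*$. To prevent this, I would choose, for each $v_i \in S$, to delete only edges that do not lie in any triangle of $G_n^*$; Lemma~\ref{lem:H} permits this freedom because it tracks only degree sequences and not the identity of the deleted edges, so the same binomial lower bound on nonisomorphic subgraphs still applies.

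Third, I would verify that this restricted form of Lemma~\ref{lem:H} is feasible. I would select $v$ and $S \subseteq N_1(v)$ so that each $v_i \in S$ has at least $\delta_n - 3$ non-triangular edges into $N_2(v)$. Since at most $3 t_n'$ vertices of $G_n^*$ lie on any triangle, and since each such vertex contributes at most two triangular edges per triangle it belongs to, an averaging argument over the choice of $v$ produces a vertex whose first neighborhood contains $\lceil \varepsilon \delta_n \rceil$ vertices essentially free of triangular edges into $N_2(v)$. The output families are then spanning subgraphs of $G_n$, each with exactly $t_n'$ triangles by construction, Meyniel extremal by Lemma~\ref{lem:subgraph}, and pairwise nonisomorphic by the degree-sequence distinction inside Lemma~\ref{lem:H}.

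The main obstacle is this third step: guaranteeing, uniformly over $0 \le t_n' \le t_n$, the existence of such a $v$. When $t_n'$ is small this is easy because almost every vertex of $G_n^*$ is triangle-free, but when $t_n'$ is close to $t_n = O(n\sqrt n)$ many vertices lie on several triangles and a more delicate averaging is required. The fallback is to let the at most $O(\sqrt n)$ vertices in $N_1(v)$ that do not meet the non-triangular edge budget play the role of the $k$ low-degree vertices permitted by Lemma~\ref{lem:subgraph}, which preserves Meyniel extremality while still yielding the claimed binomial count of nonisomorphic families.
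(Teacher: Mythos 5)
Your proposal follows essentially the same route as the paper: exploit the edge-disjointness of triangles in a $C_4$-free graph to delete one edge from each of $t_n - t_n'$ chosen triangles while keeping every degree in $\Theta(\sqrt n)$, and then run the Lemma~\ref{lem:H}/Lemma~\ref{lem:subgraph} machinery on the bipartite graph between a subset of $N_1(v)$ and $N_2(v)$ for a minimum-degree vertex $v$. The only divergence is in how the interaction between the second round of deletions and the surviving $t_n'$ triangles is handled: the paper arranges the triangle-reduction so that no $N_1(v)$--$N_2(v)$ edges are removed and then simply asserts the final triangle count, whereas you explicitly restrict the Lemma~\ref{lem:H} deletions to non-triangular edges and flag the feasibility question this raises (a surviving triangle $v_i z_1 z_2$ with $z_1,z_2\in N_2(v)$ consumes two of $v_i$'s edges into $N_2(v)$) --- a legitimate subtlety that the paper's own write-up elides rather than resolves.
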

\begin{proof}
Denote the set of triangles in $G_n$ as  $\lbrace T_i\rbrace_{i=1}^{t_n}$ where $T_i$ is on $x < y < z$. For the vector $\mathbf x\in\{0,1,2\}^{t_n'}$, let $G_{\mathbf {x},n}$ be obtained by removing one edge from $T_i$ for $1 \le i \le t_n'$ in the following way. If $\mathbf x = (a_i)_{i=1}^{t_n'}$ and $a_i=0$, then remove the edge $xy$ from $T_i$. If $a_i =1$, then remove the edge $yz$; if $a_i=2$, then remove $xz$. Notice that each vertex has degree at least $\delta_n/2 = \Theta(\sqrt n)$. For this, consider a vertex $v$. Each edge in $G_n$ not contained in a triangle will remain in $G_{\mathbf{x},n}$, so $v$ loses at most one edge for each triangle containing $v$. As all the triangles in $G_n$ are edge-disjoint, there are at most $\deg_{G_n}(v)/2$ triangles containing $v$. 

Now take a vertex of minimum degree $v$ in $G_n$.
Apply the above deletion procedure to each $G_n$ until we have $t_n'$ triangles remaining. Observe that all edges in $G_n$ within $N_1(v)$ are contained in triangles in $G_n$ containing $v$, so we may choose $\mathbf x$ such that only the edges in $N_1(v)$ are removed. In addition, we may choose $\mathbf x$ in such a way that all edges from $N_1(v)$ to $N_2(v)$ remain. Define a family of bipartite graphs $\lbrace J_n \rbrace_{n\in I}$ such that one bipartition consists of $\lceil \varepsilon \delta_n\rceil$ vertices of $N_1(v)$ in $G_n$ and the other is $N_2(v)$. 
 
We may apply Lemma~\ref{lem:H} to $J_n$ (ignoring the edges in $N_2(v)$) to obtain nonisomorphic families $\lbrace J_{k, n}\rbrace_{n\in I}$ for $1 \le k \le r$ with 
$$r \ge \binom{(1+\varepsilon)\delta_n-3}{\delta_n-3} \sim 2^{ ((1+\varepsilon)\delta_n-3)H({1}/{(1+\varepsilon))}},$$
as
$$
\frac{d-1}{a+d-1} \sim \frac{1}{1+\varepsilon}.
$$
Note that $J_{k,n}$ is a spanning subgraph of $J_n$ (and thus, of $G_n$). 
In addition, since each subgraph is connected and has at most $\varepsilon \delta_n$ many vertices with degree $o(\sqrt{n})$, Lemma~\ref{lem:subgraph} implies that each of these spanning subgraphs are Meyniel extremal. Since $J_{k,n}$ also contains exactly $t_n'$ triangles, the result follows.  
\end{proof}

\section{Techniques from hypergraphs}

A \emph{hypergraph} is a discrete structure with vertices and \emph{hyperedges}, which consists
of sets of vertices. Graphs are special cases of hypergraphs, where each hyperedge has cardinality two. A \emph{blocking set} of a hypergraph $(V,E)$ is a subset of its vertices such that each edge contains one vertex from the subset of vertices. Define the indicator variable $x_v$ to be $1$ if $v$ is in the blocking set. The condition 
\[
\sum_{v \in e} x_v \geq 1,
\]
holds for each edge $e$ in the hypergraph. We can then think of finding a minimum cardinality of a blocking set as an IP problem, with an objective function
\[
\sum_{v \in V} x_v,
\]
which is being minimized. The minimum value of this objective function will be denoted $\tau$. 
We may relax $x_v$ to be a nonnegative real value, in which case the IP problem becomes an LP one. The resulting minimum value of the objective function is written $\tau^*$, and the solution is known as a \emph{fractional solution to the blocking problem}. 

\begin{theorem}[Lov\'{a}sz \cite{lovasz1975}] \label{thm:lovasz}
For a hypergraph $(V,E)$, let $\tau$ denote the cardinality of a minimum cardinality blocking set, $\tau^*$ denote the minimum value of a fractional solution of the blocking set, and $d$ the maximum degree of a vertex. We then have that
\[
\tau < \tau^* (1 + \log d).
\]
\end{theorem}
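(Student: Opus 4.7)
The plan is to cast the blocking-set problem as a set-cover instance and analyze the standard greedy algorithm via an LP dual-fitting argument. For each vertex $v \in V$, let $S_v = \{e \in E : v \in e\}$ denote the set of hyperedges through $v$; a blocking set corresponds precisely to a collection of vertices whose sets $S_v$ cover $E$. By hypothesis $|S_v| \le d$, so this is set cover with maximum set-size $d$. I would run the standard greedy algorithm: repeatedly pick a vertex covering the largest number of still-uncovered edges, continuing until all of $E$ is covered, producing some blocking set $B$.

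To bound $|B|$, I would distribute unit charges across the edges. When a greedy step picks a vertex that covers $k$ previously-uncovered edges, assign each of those $k$ edges a charge of $1/k$; let $c_e$ denote the total charge received by edge $e$. By construction, $|B| = \sum_{e \in E} c_e$, since each greedy step distributes total charge $1$. The key lemma is that for every vertex $w \in V$,
\[
\sum_{e \in S_w} c_e \;\le\; H_{|S_w|} \;\le\; H_d,
\]
where $H_d = 1 + \tfrac{1}{2} + \cdots + \tfrac{1}{d}$ is the $d$-th harmonic number. To see this, order the edges of $S_w$ by the step at which each one is first covered: just before the $i$-th such edge is covered, at least $|S_w| - i + 1$ edges of $S_w$ remain uncovered. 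Since greedy picks a vertex covering the most uncovered edges, the vertex chosen at that step covers at least $|S_w| - i + 1$ edges in total, and so the charge placed on the $i$-th edge of $S_w$ is at most $1/(|S_w| - i + 1)$. Summing over $i$ yields $H_{|S_w|}$.

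The final step turns this vertex-wise estimate into a comparison with $\tau^*$ using an optimal fractional blocking set $x^*$. Since $\sum_{v \in e} x^*_v \ge 1$ for every $e$, switching the order of summation gives
\[
|B| \;=\; \sum_{e \in E} c_e \;\le\; \sum_{e \in E} c_e \sum_{v \in e} x^*_v \;=\; \sum_{v \in V} x^*_v \sum_{e \in S_v} c_e \;\le\; H_d \sum_{v \in V} x^*_v \;=\; H_d\, \tau^*.
\]
Since $\tau \le |B|$, and $H_d < 1 + \log d$ for $d \ge 2$ (using $\ln d < \log_2 d$ whenever $d > 1$, together with the standard estimate $H_d \le 1 + \ln d$), the desired inequality $\tau < \tau^*(1 + \log d)$ follows; the case $d = 1$ is trivial because the hyperedges are then pairwise disjoint and both sides equal $\tau^*$. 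No substantial obstacle is expected: the core of the argument is the elementary harmonic-sum calculation for edges through a fixed vertex. The only real subtlety is the strict form of the inequality, which is why one estimates against $1 + \log d$ rather than against $H_d$ directly.
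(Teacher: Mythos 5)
The paper does not prove this statement; it is quoted from Lov\'asz \cite{lovasz1975} as a known black box, so there is no internal proof to compare against. Your argument is the standard (and essentially original) one: run greedy set cover on the sets $S_v$ of edges through each vertex, charge $1/k$ to each newly covered edge, bound the total charge through any fixed vertex by $H_d$ via the ``at least $|S_w|-i+1$ edges still uncovered'' observation, and then dual-fit against an optimal fractional solution $x^*$; every step checks out, including the tie-handling in the harmonic bound. The only blemish is the $d=1$ case: there the edges are pairwise vertex-disjoint, so $\tau=\tau^*$ while $1+\log 1=1$, and the \emph{strict} inequality in the statement actually fails rather than being ``trivial'' --- but this is a defect of the quoted statement itself (Lov\'asz's bound is $\tau\le(1+\ln d)\tau^*$), not of your argument, which correctly delivers $\tau\le H_d\,\tau^*<(1+\log_2 d)\,\tau^*$ for all $d\ge 2$.
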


\subsection{Domination number}

Theorem \ref{thm:lovasz} can be used to prove results on graph domination, and the aim of this subsection is to use it to prove the following. 

\begin{theorem} \label{thm:dominationMostLargeDeg}
Let $\omega=\omega(n)$ be a nondecreasing, integer-valued function tending to infinity, and let $\lbrace G_n \rbrace_{n\in I}$ be a family of graphs, where $G_n$ is of order $n$. If $G_n$ has at most $O(\sqrt{n})$ vertices of degree $o(\sqrt{n}),$ then $G_n$ has domination number $O(\omega \sqrt{n} \log n )$.
\end{theorem}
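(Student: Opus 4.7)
The plan is to convert graph domination into a hypergraph blocking problem and then apply Theorem~\ref{thm:lovasz}. Specifically, for each $G_n$ I will form the hypergraph $H_n$ with vertex set $V(G_n)$ and hyperedge set $\{N_{G_n}[v] : v \in V(G_n)\}$. A subset $D \subseteq V(G_n)$ is a dominating set of $G_n$ if and only if $D$ meets $N[v]$ for every $v$, which is exactly the blocking condition for $H_n$. A vertex $u$ lies in $N[v]$ iff $v \in N[u]$, so the degree of $u$ in $H_n$ is $\deg_{G_n}(u)+1 \le n$. Hence the hypothesis of Theorem~\ref{thm:lovasz} will give $\tau(H_n) < \tau^*(H_n)(1+\log n)$, and the whole game reduces to exhibiting a fractional blocking solution of weight $O(\omega\sqrt{n})$.

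To build that fractional solution, I would split the vertices by degree using the slack that $\omega(n)\to\infty$ provides. Let
\[
S_n = \{v \in V(G_n) : \deg_{G_n}(v) < \sqrt{n}/\omega(n)\}, \qquad L_n = V(G_n)\setminus S_n.
\]
Since $\sqrt n/\omega(n) = o(\sqrt n)$, every vertex of $S_n$ has degree in $o(\sqrt{n})$, so the hypothesis yields $|S_n| = O(\sqrt{n})$. Define a candidate fractional solution by
\[
x_v = 1 \text{ for } v \in S_n, \qquad x_v = \omega(n)/\sqrt{n} \text{ for } v \in L_n.
\]
Its total weight is at most $|S_n| + n\cdot \omega(n)/\sqrt{n} = O(\sqrt{n}) + O(\omega\sqrt{n}) = O(\omega\sqrt{n})$.

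I then need to check the blocking inequality $\sum_{u\in N[v]} x_u \ge 1$ for every $v$. There are three cases, all easy: if $v \in S_n$, then $x_v=1$ is already in the sum; if $v\in L_n$ and some neighbor or $v$ itself lies in $S_n$, the contribution $1$ again suffices; and if $v\in L_n$ with $N[v]\subseteq L_n$, then $|N[v]| = \deg(v)+1 \ge \sqrt{n}/\omega(n)$, and the sum is at least $(\sqrt n/\omega)\cdot(\omega/\sqrt n)=1$. So $\tau^*(H_n)=O(\omega\sqrt{n})$, and Theorem~\ref{thm:lovasz} concludes that the domination number is $O(\omega\sqrt n\log n)$, as desired.

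The main obstacle, and the reason the factor $\omega(n)$ appears in the conclusion, is calibrating the threshold that defines $S_n$: if one uses a fixed constant times $\sqrt{n}$ as the cutoff, the hypothesis $|S_n|=O(\sqrt n)$ need not hold, while taking the cutoff too low makes the uniform weight $\omega/\sqrt{n}$ on $L_n$ too small to dominate large-degree vertices all of whose neighbors happen to be small. The choice $\sqrt{n}/\omega(n)$ balances these two effects: it is still $o(\sqrt n)$ (so the hypothesis applies), yet $\omega/\sqrt n\cdot \sqrt n/\omega = 1$ ensures the constraint on $L_n$-vertices with all-$L_n$ neighborhoods is exactly satisfied. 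Once this calibration is in place, the remaining work is routine.
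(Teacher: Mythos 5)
Your proof is correct, and while it uses the same overall framework as the paper---encode domination as a blocking-set problem on neighborhood hyperedges and apply Theorem~\ref{thm:lovasz}---the key step, producing a fractional blocking solution of weight $O(\omega\sqrt n)$, is carried out in a genuinely different and simpler way. The paper sets $x_u = 1/s(u)$ with $s(u)=\min_{v\in N(u)}\deg(v)$ and controls $\sum_u 1/s(u)$ by partitioning the vertices into roughly $\log n/\log\omega$ geometric degree classes $V_0,\dots,V_k$ with consecutive ratios $\omega(n)$, the factor $\omega$ in the final bound coming from the slack between adjacent scales. You instead use a flat two-level solution: weight $1$ on the $O(\sqrt n)$ vertices of degree below $\sqrt n/\omega(n)$ and uniform weight $\omega/\sqrt n$ on the rest, after which feasibility is a three-case check and the weight bound is immediate. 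Both routes give $\tau^*=O(\omega\sqrt n)$ and hence the theorem after multiplying by $1+\log d=O(\log n)$; yours avoids the bookkeeping with $s(u)$ entirely, and your use of closed neighborhoods $N[v]$ (the paper uses open neighborhoods, so its blocking sets are in fact total dominating sets) makes the correspondence with domination cleaner. The only point to flag, which applies equally to the paper's argument, is that the hypothesis ``at most $O(\sqrt n)$ vertices of degree $o(\sqrt n)$'' must be read as applying to the concrete threshold $\sqrt n/\omega(n)$; you invoke it in exactly the same way the paper does (its classes $V_0,\dots,V_{k-2}$ are precisely the vertices of degree at most $\sqrt n/\omega(n)$), so this is an interpretive matter rather than a gap.
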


We have the following corollary.

\begin{corollary} \label{cor:resultsMey}
If Meyniel's conjecture is false, then either any family of graphs has cop number $O(\sqrt{n} \log n)$, or there is a family of graphs $\lbrace G_n \rbrace_{n\in I}$, where $G_n$ has order $n$, such that $G_n$ has cop number $\omega(\sqrt{n}\log n)$, and $\omega(\sqrt{n})$ vertices of degree $o(\sqrt{n})$.
\end{corollary}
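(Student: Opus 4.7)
The plan is a contrapositive of Theorem~\ref{thm:dominationMostLargeDeg}, coupled with the standard inequality $c(G)\le\gamma(G)$, where $\gamma(G)$ is the domination number of $G$. The latter inequality follows immediately from the rules of the game: if the cops start on a dominating set, they capture the robber in at most one move.

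Assume Meyniel's conjecture is false. If every family $\lbrace G_n \rbrace_{n\in I}$ satisfies $c(G_n)=O(\sqrt n\log n)$, then the first alternative in the conclusion holds and we are done. Otherwise, fix a family $\lbrace G_n \rbrace_{n\in I}$ for which $c(G_n)$ is not $O(\sqrt n\log n)$; since $\gamma(G_n)\ge c(G_n)$, the domination number is also not $O(\sqrt n\log n)$. Now invoke the contrapositive of Theorem~\ref{thm:dominationMostLargeDeg}: if $G_n$ had only $O(\sqrt n)$ vertices of degree $o(\sqrt n)$, then for every nondecreasing integer-valued $\omega\to\infty$ we would have $\gamma(G_n)=O(\omega(n)\sqrt n\log n)$, and this uniform bound over $\omega$ is incompatible with $\gamma(G_n)$ exceeding every constant multiple of $\sqrt n\log n$. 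Hence $G_n$ must have $\omega(\sqrt n)$ vertices of degree $o(\sqrt n)$, which is the second alternative.

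The main care point, and where I expect a careless write-up to stumble, is matching the various occurrences of ``$\omega$'': the function witnessing that $c(G_n)$ outgrows $\sqrt n\log n$, the free parameter $\omega$ appearing in Theorem~\ref{thm:dominationMostLargeDeg}, and the $\omega(\sqrt n)$ count of low-degree vertices in the conclusion all have to be linked. The cleanest way to handle this is to pass from ``$\gamma(G_n)=O(\omega\sqrt n\log n)$ for every $\omega\to\infty$'' to ``$\gamma(G_n)=O(\sqrt n\log n)$'' by a monotonization argument: writing $h(n)=\gamma(G_n)/(\sqrt n\log n)$ and $M(n)=\sup_{m\le n,\,m\in I} h(m)$, if $h$ were unbounded the choice $\omega(n)=\max(1,\lfloor\sqrt{M(n)}\rfloor)$ would be a nondecreasing integer-valued function tending to infinity for which $h/\omega$ is unbounded, contradicting the theorem. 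Everything else in the argument is routine.
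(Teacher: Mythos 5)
Your derivation is correct and is the intended one (the paper states the corollary without proof immediately after Theorem~\ref{thm:dominationMostLargeDeg}): combine $c(G)\le\gamma(G)$ with the contrapositive of that theorem, and your monotonization of $\omega$ is exactly the care needed to discharge the free parameter in the theorem's hypothesis. The only remaining bookkeeping, which the paper also elides, is passing to a subsequence of the index set so that ``not $O(\sqrt n\log n)$'' and ``not $O(\sqrt n)$'' become the little-$\omega$ statements in the corollary's conclusion.
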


By Corollary~\ref{cor:resultsMey}, we know that a family of graphs that violates Meyniel's conjecture may have a nontrivial number of vertices with small degree. 
In the search for graphs with the asymptotically largest cop number (in particular, if the aim is to find graphs with cop number $\omega(\sqrt{n})$), it may therefore be important to consider those graphs with some vertices of a smaller degree. This is at odds with the current constructions of graph families with high cop number, as these typically have all vertices of approximately the same degree, and each vertex has a relatively large degree. 

Another consequence of Theorem~\ref{thm:dominationMostLargeDeg} is that if the soft Meyniel's  conjecture is true, and there exists a graph family with cop number $\Theta(n^{1-\alpha})$ for some $\alpha >0$, then we must have the minimum degree of a graph of the graph class to be at most $n^{\alpha} (1+\log\Delta)$. This may be asymptotically much smaller than $\Theta(n^{1/2})$, although so far we only require one vertex to have such a small degree. Theorem~\ref{thm:dominationMostLargeDeg} therefore strengthens this result, showing that such a family of graphs requires a significant number of such vertices. 

\medskip

The idea underlying our proof of Theorem~\ref{thm:dominationMostLargeDeg} is to construct a certain hypergraph, where each hyperedge is formed as the closed neighborhood of a given vertex of the graph. The set of vertices that form a blocking set of the hypergraph is also a dominating set of the original graph. We note that this is a modification of an idea by Beke \cite{Beke2022} to determine an upper bound on the metric dimension of incidence graphs of M\"{o}bius planes. 

To demonstrate the hypergraph technique, we prove the following result as a warm-up, although stronger results are known to exist; see Theorem~1.2.2 of \cite{as}, for example.  
\begin{theorem} \label{thm:upperPrecise}
If $G$ is a graph of order $n$, then the domination number of $G$ is at most 
\[\frac{n}{\delta(G)} (1+\log\Delta(G)).\] 
\end{theorem}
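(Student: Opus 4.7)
The plan is to encode domination in $G$ as a blocking problem on a hypergraph, then apply Theorem~\ref{thm:lovasz}. Define the hypergraph $\mathcal{H}$ with vertex set $V(G)$ and with one hyperedge $e_v = N[v]$ for each $v \in V(G)$. A set $S \subseteq V(G)$ blocks $\mathcal{H}$ if and only if $N[v] \cap S \neq \emptyset$ for every $v$, which is exactly the statement that $S$ dominates $G$. Hence $\tau(\mathcal{H})$ equals the domination number $\gamma(G)$, so it suffices to bound $\tau(\mathcal{H})$.

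Next I would control $\tau^*(\mathcal{H})$ by exhibiting an explicit feasible fractional solution. The natural choice is the uniform assignment $x_v = 1/\delta(G)$ for every $v$. For any hyperedge $e_u$ we have
\[
\sum_{v \in e_u} x_v = \frac{\deg_G(u)+1}{\delta(G)} \geq \frac{\delta(G)+1}{\delta(G)} > 1,
\]
so the constraints hold, and the objective value is $n/\delta(G)$. This gives $\tau^*(\mathcal{H}) \leq n/\delta(G)$.

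Finally I need the maximum vertex-degree of $\mathcal{H}$. A vertex $v$ lies in $e_u$ precisely when $u \in N[v]$, so its hypergraph degree equals $\deg_G(v)+1$, and the maximum is $\Delta(G)+1$. Theorem~\ref{thm:lovasz} then yields
\[
\gamma(G) \;=\; \tau(\mathcal{H}) \;<\; \tau^*(\mathcal{H})\bigl(1+\log(\Delta(G)+1)\bigr) \;\leq\; \frac{n}{\delta(G)}\bigl(1+\log(\Delta(G)+1)\bigr),
\]
which is the stated inequality up to the harmless $\Delta(G)+1$ versus $\Delta(G)$; one can absorb this either by tightening $x_v$ to $1/(\delta(G)+1)$ and using the corresponding slightly smaller maximum degree, or by observing that the Lovász bound is strict and a trivial rounding suffices. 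No step here presents a real obstacle; the substance of the argument is the hypergraph encoding, after which Lovász's theorem does the heavy lifting.
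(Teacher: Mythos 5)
Your proposal follows the same route as the paper: encode domination as a hypergraph blocking problem, exhibit the uniform fractional solution, and apply Theorem~\ref{thm:lovasz}. The one substantive difference is your choice of hyperedges. Taking $e_v = N[v]$ makes the hypergraph degree of a vertex equal to $\deg_G(v)+1$, so Lov\'asz gives $\frac{n}{\delta(G)}\bigl(1+\log(\Delta(G)+1)\bigr)$ rather than the stated bound, as you note. The paper avoids this by using open neighborhoods $h_w = N(w)$: a blocking set of that hypergraph meets $N(w)$ for every $w$, hence is a (total, and in particular ordinary) dominating set; the hyperedge sizes are $\deg_G(w)\ge\delta(G)$, so $x_v = 1/\delta(G)$ is still feasible; and the hypergraph degree of $v$ is exactly $\deg_G(v)\le\Delta(G)$, which produces $\log\Delta(G)$ on the nose. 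Of your two proposed patches, be careful with the second: ``the Lov\'asz bound is strict, so a trivial rounding suffices'' does not close the gap, because $\frac{n}{\delta(G)}\bigl(\log(\Delta(G)+1)-\log\Delta(G)\bigr)$ can exceed $1$ when $n/\delta(G)$ is large relative to $\Delta(G)$. The first patch, taking $x_v = 1/(\delta(G)+1)$, does work but is not automatic: the objective drops to $n/(\delta(G)+1)$, and one must check that $\frac{1+\log(\Delta+1)}{1+\log\Delta}\le\frac{\delta+1}{\delta}$, which follows for $\Delta\ge 2$ from $\Delta\log(1+1/\Delta)\le\log e<2\le 1+\log\Delta$ together with $\delta\le\Delta$, the case $\Delta=1$ being trivial. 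The open-neighborhood encoding is the cleaner fix.
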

\begin{proof}
Define a hypergraph with vertex set $V$ and with edges defined for each $w \in V$ as $h_w = \{ v \in V : v \in N(w)\}$. 
The hypergraph has $d = k_1$. 
Note that if we find a blocking set $S$ of this hypergraph, then $S$ is also a subset of $V$ such that every vertex in $V$ is adjacent in $G$ to at least one vertex in $S$. In particular, $S$ dominates $V$ (although note that every vertex of $V$ is adjacent to a vertex in $S$, not just $V\setminus S$).  
In the related LP problem, set $x_v = \frac{1}{\delta(G)}$. We then have that $\sum_{v \in h_w} x_v \geq 1$ for each hyperedge $h_w$, and the objective function evaluates to $\tau^* = \frac{|V|}{\delta(G)}$. Thus, we have that
$$\tau <  \frac{|V|}{\delta(G)} (1+\log\Delta(G)),$$ and so 
any minimal set that dominates $V$ has cardinality at most $\frac{|V|}{\delta(G)} (1+\log\Delta(G))$. The proof follows.
\end{proof}

We now turn to the proof of the main result of the section.

\begin{proof}[Proof of Theorem~\ref{thm:dominationMostLargeDeg}]
For convenience, assume $k$ defined as $(\omega(n))^k = n^{1/2}$ is an integer. (The proof is straightforwardly modified,  otherwise.)  
Consider $G$, a graph in the family of graphs, that has $n$ vertices. 
Suppose $V_i$ is the subset of vertices $u$ such that  $(\omega(n))^{i} < \mathrm{deg}(u) \leq (\omega(n))^{i+1}$ for $1 \leq i \leq k-1$, $V_0$ is the subset of vertices $u$ such that $1 \leq \mathrm{deg}(u) \leq \omega(n)$,  and $V_k$ is the subset of vertices $u$ such that $n^{1/2} < \mathrm{deg}(u)$. 
Note that by assumption, $\sum_{i=0}^{k-2} |V_i| = O(\sqrt{n})$. 

Define a hypergraph with vertices $V(G)$ and with hyperedge $h_w=N_G(w)$ for each $w \in V(G)$. 
Define the function $s(u) = \min_{v \in N(u)} \text{deg}(v)$.
 It follows that the vertices $u$ that satisfy $$(\omega(n))^{-(i+1)} \leq \frac{1}{s(u)} < (\omega(n))^{-i},$$ for $0 \leq i \leq k-2$, are contained in a subset of $N(V_i)$, and so there are at most $|V_i| (\omega(n))^{i+1}$ such vertices. 
 There are also at most $n$ vertices $u$ that satisfy $$(\omega(n))^{-k} \leq \frac{1}{s(u)} < (\omega(n))^{-k+1} = n^{-1/2}\omega(n),$$ and at most $n$ vertices $u$ that have $\frac{1}{s(u)} < n^{-1/2}$, which are subsets of $N(V_{k-1})$ and $N(V_{k})$, respectively. 
 
We now define the variables in the corresponding LP problem.  For each vertex $v$ in the hypergraph, we may define $x_v = \frac{1}{s(u)}$. It follows that $\sum_{v \in h_w}  x_v \geq 1$ for each $w \in V(G)$. 
We then have that
 \begin{align*}
 \tau^* 
 &\leq \sum_{i=0}^{k} \sum_{v \in V_i} \frac{1}{s(u)} \\
 &<  |V_{k-1}| n^{-1/2}\omega(n)+ |V_k| n^{-1/2} +  \sum_{i=0}^{k-2} |V_i| (\omega(n))^{i+1} (\omega(n))^{-i} \\
 &=  
 n^{1/2} +  n^{1/2}\omega(n) + \sum_{i=0}^{k-1} O(n^{1/2}) \omega(n)\\
 & = O(n^{1/2}\omega(n)).
 \end{align*}
 The proof now follows by Theorem~\ref{thm:lovasz}. 
\end{proof}

\subsection{Cop number}

Frankl~\cite{frankl} proved that for a graph $G$, $c(G) \leq (1 + o(1)) \frac{n \log \log n}{\log n}$, which was improved to $O(\frac{n}{\log{n}})$ in \cite{ch}. The results of \cite{fkl,lu,ss} reduced this upper bound down further to 
\begin{equation}
c(n) \leq O\left( \frac{n}{2^{(1-o(1)) \sqrt{\log_2{n}}}}\right). \label{bkb}
\end{equation}
We give new upper bounds on the cop number, improving (\ref{bkb}) in some cases, using the hypergraph techniques in this section.

As first defined in \cite{ch}, a \emph{minimum distance caterpillar} (or \emph{mdc})  in a graph $G$ is an induced subgraph of $G$ whose vertices consist of a shortest path $P$ between two vertices of $G$, along with a subset of the neighbors of vertices in $P$. We say the \emph{length} of an mdc is the length of $P$. As shown in \cite{ch}, five cops may \emph{guard} an mdc, in the sense that after some number of rounds, the robber is captured if they enter it. Define a \emph{diameter length caterpillar} (or \emph{DLC}) as an mdc of length $\mathrm{diam}(G)$. In particular, five cops may guard a DLC.

A graph is \emph{vertex-transitive} if for every two vertices $x$ and $y$, there is an automorphism mapping $x$ to $y.$ A vertex-transitive graph is $m$-regular for some nonnegative integer $m,$ and we refer to $m$ as its \emph{degree}. The following theorem bounds the cop number of a vertex-transitive graph by a function of its degree and diameter. 

\begin{theorem} \label{thm:vertexTrans}
Let $G$ be a vertex-transitive graph $G$ with degree $m$, and let $d = m \cdot \mathrm{diam}(G)$. We then have that 
\[
c(G) \leq  \frac{3n \log{d}}{d} = O\left(\frac{n \log{d}}{d}\right).
\]
\end{theorem}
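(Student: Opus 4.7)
The plan is to combine the hypergraph blocking-set technique of Theorem~\ref{thm:lovasz} with the fact, recalled just before the statement, that five cops can guard a DLC. The strategy is to cover $V(G)$ by a small family of DLCs and to station five cops on each; because every vertex of $G$ lies in some guarded DLC and the robber must occupy a vertex, the robber is captured once the cops are in position. So the cop number is bounded above by roughly five times the minimum number of DLCs needed to cover $V(G)$, and the whole content of the theorem is that vertex-transitivity forces this covering number to be $O(n\log d/d)$.

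To carry this out, fix a DLC $D_0$ of $G$ consisting of a diameter-realising shortest path $P=v_0v_1\cdots v_k$ together with every vertex of $G$ adjacent to some vertex of $P$, and let $\mathcal{D}$ be the orbit of $D_0$ under $\mathrm{Aut}(G)$. Vertex-transitivity ensures that every vertex of $G$ lies in the same number $q$ of members of $\mathcal{D}$, and double-counting incidences yields $q\,|\mathcal{D}|=n\,|V(D_0)|$. Form the hypergraph $H$ on vertex set $\mathcal{D}$ with hyperedges $e_v=\{D\in\mathcal{D}:v\in D\}$ for $v\in V(G)$; a blocking set of $H$ is precisely a subfamily of DLCs covering $V(G)$. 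The uniform assignment $x_D=1/q$ is LP-feasible (each $|e_v|=q$) with objective value $|\mathcal{D}|/q=n/|V(D_0)|$, so $\tau^*(H)\le n/|V(D_0)|$. The maximum degree of a vertex in $H$ is $|V(D_0)|$ (the number of graph-vertices it contains), so Theorem~\ref{thm:lovasz} gives $\tau(H)<\frac{n}{|V(D_0)|}\bigl(1+\log|V(D_0)|\bigr)$, and the cop number is at most $5\tau(H)$.

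The remaining step, and the main obstacle, is bounding $|V(D_0)|$ from below in terms of $d=m\cdot\mathrm{diam}(G)$. The key combinatorial fact is that, because $P$ is a geodesic, no vertex off $P$ can be adjacent to $v_i$ and $v_j$ with $j-i\ge 3$: such a chord would produce a path of length less than $k$ between $v_0$ and $v_k$, contradicting $k=\mathrm{diam}(G)$. Hence each exterior vertex is a neighbor of at most three consecutive vertices of $P$. Summing the common degree $m$ across the $k+1$ vertices of $P$ and subtracting the $2k$ edges internal to $P$ produces $(k+1)m-2k$ edges leaving $P$, each counted at most three times among exterior neighbors, giving $|V(D_0)|\ge(k+1)+\tfrac{(k+1)m-2k}{3}=\Theta(d)$. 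Substituting this estimate into the Lovász bound and multiplying by the $5$-cops-per-DLC guarding factor yields $c(G)=O(n\log d/d)$; tracking the constants through the $d/3$ lower bound on $|V(D_0)|$ (with lower-order terms absorbed asymptotically) gives the stated bound $3n\log d/d$.
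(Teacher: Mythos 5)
Your proof is correct and follows essentially the same route as the paper's: a hypergraph whose vertices are DLCs and whose hyperedges record which DLCs contain each graph vertex, the uniform fractional blocking solution combined with Lov\'asz's theorem, and a lower bound of $d/3$ on the number of vertices in a DLC. The only differences are cosmetic --- you take the orbit of one canonical DLC (the geodesic plus \emph{all} neighbors) rather than the set of all DLCs, and you obtain the $d/3$ bound by counting edges leaving the geodesic (using that no exterior vertex sees path vertices more than two apart) instead of the paper's choice of path vertices with pairwise disjoint neighborhoods --- and both write-ups share the same harmless looseness about the factor of $5$ coming from guarding each DLC.
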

\begin{proof}
Let $V$ be the vertices in $G$. 
Let $\mu_1$ be the number of DLCs that any one vertex is in (note that by the vertex-transitivity of $G$, this value is the same for any choice of vertex). Let $\mu_2$ be the minimum number of vertices in any DLC. 
The maximum cardinality of a DLC is at most $d$, so $\mu_2 \leq d$. 

Define a new set of vertices $W,$ where each $w\in W$ is associated with a DLC, $m_w$, of $G$. 
Define a hypergraph $\mathcal{H} = (W,H)$ on the vertex set $W$ by including the hyperedge $h_v = \{w \in W : v \in m_w\}$ in the hyperedge set $H$, for each $v \in V$. In particular, a hyperedge $h_v \in H$ contains vertex $w\in W$ exactly when the vertex $v \in V$ is contain in the DLC $m_w$ of $G$. 
Let $x_w = \frac{1}{\mu_1}$. We then have that $\sum_{w \in h} x_w = |h|/{\mu_1} = 1$ for each hyperedge $h$, so the conditions of the LP are satisfied. 

We now find an upper bound for $|W|$. 
Consider the double count of the pair $\{(w,v)  \in W \times V : w \in h_v\}$, which gives
\[
\sum_{v \in V} |h_v| = \sum_{w \in W} |m_w|. 
\]
The left-hand side is $\mu_1 n$. 
As $|m_w|\geq \mu_2$, the right side is greater than $|W| \mu_2$. 
We then have that $|W| \leq \frac{\mu_1 n}{\mu_2}$. 

As a result, 
\begin{align*}
    \tau^* &= \sum_{w \in W} \frac{1}{\mu_1}
    \leq |W| \frac{1}{\mu_1}
    = \frac{n}{\mu_2}. 
\end{align*}
By Theorem~\ref{thm:lovasz}, this gives $\tau <\frac{n\log{d}}{\mu_2}$. 

The set of vertices in a blocking set of $\tau$ vertices corresponds to a set of DLCs in $G$ such that every vertex of $G$ is covered by some DLC. Since five cops may guard each of these DLCs, $5 \tau$ cops is sufficient to capture the robber in $G$.

Let $P$ be the path of length $\mathrm{diam}(G)$ associated with some DLC, and let $P'$ be a subset of the vertices of $P$ such that any pair of vertices in $P'$ have distance at lease $2$ from each other. Note that $P'$ contains $\lceil\mathrm{diam}(G)/3\rceil \geq \mathrm{diam}(G)/3$ vertices. 
Now $u,v\in P'$ are also vertices in the DLC, and have distance at least two in $G$. As such, $N(u) \cap N(v) = \emptyset$. as such, the set of all neighbors of vertices in $P'$ is equal to $m \cdot P'\geq m\cdot \mathrm{diam}(G)/3$. 
This implies that the number $\mu_2$ must be larger than $m\cdot \mathrm{diam}(G)/3 = d/3$, and so we have that $\tau < \frac{n \log d}{d/3}$, and the result follows. 
\end{proof}

The upper bound in Theorem \ref{thm:vertexTrans} provides the best-known bounds on the cop number for vertex-transitive graphs when the degree is not too small, improving the one in (\ref{bkb}). 

\begin{corollary}\label{bbb}
Suppose that $\omega=\omega(n)$ is a nondecreasing, integer-valued function function tending to infinity, and $G$ is a vertex-transitive graph with degree $m$ and with $$m \cdot \mathrm{diam}(G)\ge 3\omega
2^{(1-o(1))\sqrt{\log_2{n}}}\sqrt{\log_2{n}}.$$ The following inequalities then hold for $n$ sufficiently large.
\begin{enumerate}
    \item If $\omega = \Omega\left(2^{\sqrt{\log_2{n}}}\right)$, then $$c(G) \leq  \frac{n}{2^{(1-o(1))\sqrt{\log_2{n}}}} \frac{3\log{\omega}}{3\omega \sqrt{\log_2{n}}}=o\left(\frac{n}{2^{(1-o(1))\sqrt{\log_2{n}}}}\right).$$
    \item If $\omega = o\left(2^{\sqrt{\log_2{n}}}\right)$, then 
    \[
c(G) \leq \frac{n}{ 2^{(1-o(1))\sqrt{\log_2{n}}}} \frac{1-o(1)}{3\omega} =o\left(\frac{n}{2^{(1-o(1))\sqrt{\log_2{n}}}}\right).
\]
\end{enumerate}
\end{corollary}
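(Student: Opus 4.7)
The plan is to apply Theorem~\ref{thm:vertexTrans} directly with $d = m \cdot \mathrm{diam}(G)$, which gives $c(G) \leq 3n\log d / d$, and then read off the two cases by asymptotic analysis. Writing $L = \sqrt{\log_2 n}$ for brevity and $D_0 = 3\omega \cdot 2^{(1-o(1))L} \cdot L$ for the stated lower bound on $d$, the key observation is that $x \mapsto (\log x)/x$ is monotonically decreasing on $[e,\infty)$. For $n$ sufficiently large we have $D_0 > e$, so the inequality $d \ge D_0$ yields
$$c(G) \leq \frac{3n \log D_0}{D_0} = \frac{n \log D_0}{\omega \cdot L \cdot 2^{(1-o(1))L}}.$$
Once this is in hand, both parts of the corollary reduce to estimating $\log D_0$ in the two regimes of $\omega$ and substituting.

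For Case 1, the hypothesis $\omega = \Omega(2^L)$ gives $\log_2 \omega \ge L - O(1)$, so in the expansion $\log D_0 = \log \omega + (1-o(1))L + \log L + O(1)$ the first term dominates. Hence $\log D_0 \le (1+o(1)) \log \omega$, with the lower-order corrections absorbed into the $o(1)$ in the exponent of $2^{(1-o(1))L}$, producing the printed expression $\frac{n}{2^{(1-o(1))L}} \cdot \frac{3 \log \omega}{3 \omega L}$. That this is $o(n / 2^{(1-o(1))L})$ is immediate because $\log \omega / (\omega L) \to 0$: both $\omega \to \infty$ and $L \to \infty$.

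For Case 2, $\omega = o(2^L)$ gives $\log_2 \omega = o(L)$, so $\log D_0 = (1-o(1))L + o(L) = (1-o(1))L$. Substituting back yields
$$c(G) \leq \frac{(1-o(1))\, n}{\omega \cdot 2^{(1-o(1))L}},$$
which matches the printed form and is $o(n / 2^{(1-o(1))L})$ since $\omega \to \infty$.

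The main difficulty is not conceptual but notational: I need careful bookkeeping of the $o(1)$ and $O(\cdot)$ slack, in particular verifying that the sub-leading corrections in $\log D_0$ (the additive $\log L = \tfrac{1}{2}\log \log_2 n$ and the constant $\log 3$) can be absorbed into the $(1-o(1))$ inside the exponent $2^{(1-o(1))L}$ without changing the stated form of the bound, and that the application of monotonicity of $(\log x)/x$ is justified past its peak at $x = e$. The genuine work is all contained in Theorem~\ref{thm:vertexTrans}; Corollary~\ref{bbb} is an asymptotic unpacking of that theorem at the chosen threshold for $m \cdot \mathrm{diam}(G)$.
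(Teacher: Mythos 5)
Your approach is exactly the intended one: the paper offers no separate proof of Corollary~\ref{bbb}, treating it as a direct substitution of the hypothesis $d = m\cdot\mathrm{diam}(G)\ge D_0$ into the bound $c(G)\le 3n\log d/d$ of Theorem~\ref{thm:vertexTrans}, using the monotonicity of $(\log x)/x$ and then absorbing sub-leading factors into the $2^{(1-o(1))\sqrt{\log_2 n}}$ term, just as you do. Your bookkeeping in Case~1 and your justification of the final $o(\cdot)$ claims are correct.

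One small slip in Case~2: the implication ``$\omega = o\bigl(2^{L}\bigr)$ gives $\log_2\omega = o(L)$'' is false --- take $\omega = 2^{L}/L$, for which $\log_2\omega = (1-o(1))L$. So you cannot conclude $\log D_0 = (1-o(1))L$ in general; all you get in this regime is $\log_2\omega \le (1+o(1))L$ and hence $\log D_0 = O(L)$. This is harmless for the corollary: the resulting constant factor (at most $2+o(1)$ in place of $1-o(1)$) is of the form $2^{o(L)}$ and is absorbed into the $(1-o(1))$ in the exponent of the denominator, so both the printed intermediate expression (up to that absorption) and the final conclusion $c(G)=o\bigl(n/2^{(1-o(1))\sqrt{\log_2 n}}\bigr)$ survive, the latter needing only $\omega\to\infty$. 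You should state the absorption explicitly rather than route it through the false implication.
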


We also have that Meyniel's soft conjecture holds up to a logarithmic factor for vertex-transitive graphs of order $n$ when the degree is a fractional power or linear in $n$. 
\begin{corollary}
If $G$ is a vertex-transitive graph with degree $m=\Theta(n^{1-\varepsilon})$ for a constant $\varepsilon\geq 0$, then 
\[
c(G) = O(n^{1-\varepsilon} \log n).
\]
\end{corollary}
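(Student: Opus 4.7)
The plan is to deduce the corollary from Theorem~\ref{thm:vertexTrans}, which gives $c(G)\le 3n\log d/d$ for $d=m\cdot\mathrm{diam}(G)$. With $m=\Theta(n^{1-\varepsilon})$ fixed by hypothesis, the remaining task is to produce a lower bound on $\mathrm{diam}(G)$ and then substitute.

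For the diameter I would invoke the Moore-type count: since $G$ is a connected $m$-regular graph on $n$ vertices, the number of vertices within distance $D=\mathrm{diam}(G)$ of any fixed vertex is at most $1+m+m(m-1)+\cdots+m(m-1)^{D-1}$, so $n\le m^D$ up to lower-order factors, and hence $\mathrm{diam}(G)\ge \log n/\log(1+m)$. Plugging in $m=\Theta(n^{1-\varepsilon})$, so that $\log(1+m)\sim(1-\varepsilon)\log n$, this becomes $\mathrm{diam}(G)\ge 1/(1-\varepsilon)-o(1)$, a positive constant for any $\varepsilon$ bounded away from $1$. Therefore $d=m\cdot\mathrm{diam}(G)=\Omega(n^{1-\varepsilon})$, and Theorem~\ref{thm:vertexTrans} gives $c(G)\le 3n\log d/d = O(n\log n/n^{1-\varepsilon})=O(n^{\varepsilon}\log n)$. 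In the range $\varepsilon\le 1/2$ — precisely the regime singled out in the discussion immediately before the corollary, where the degree is at least $\sqrt n$ (the ``fractional power or linear in $n$'' case) — we have $n^{\varepsilon}\le n^{1-\varepsilon}$, and the stated conclusion $c(G)=O(n^{1-\varepsilon}\log n)$ drops out at once.

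The hard part is the complementary range $\varepsilon>1/2$, where $m=o(\sqrt n)$ and the claimed $O(n^{1-\varepsilon}\log n)=O(m\log n)$ is strictly sharper than the $O(n^{\varepsilon}\log n)$ produced by Theorem~\ref{thm:vertexTrans} combined with the Moore bound. I do not see how to close this gap using only the paper's toolkit: the Moore estimate on $\mathrm{diam}(G)$ is essentially tight for vertex-transitive expanders, and Theorem~\ref{thm:vertexTrans} uses $m$ and $\mathrm{diam}(G)$ only through the product $d$, so neither input can be squeezed further in isolation. I therefore expect the corollary to be intended for $\varepsilon\in[0,1/2]$, matching the preceding paragraph's framing in terms of Meyniel's soft conjecture; within that range the plan above delivers the bound directly, while outside that range a genuinely new cop-strategy argument would be required.
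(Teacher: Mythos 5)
Your derivation is essentially the paper's: the corollary is stated without proof, and the evident intended argument is simply to feed $d=m\cdot\mathrm{diam}(G)\ge m=\Theta(n^{1-\varepsilon})$ into Theorem~\ref{thm:vertexTrans} together with $\log d=O(\log n)$. Your Moore-bound detour to lower-bound the diameter is superfluous---$\mathrm{diam}(G)\ge 1$ already gives $d\ge m$, and the extra constant factor of roughly $1/(1-\varepsilon)$ changes nothing asymptotically---but it is not wrong.

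Your main observation is correct and worth stating plainly: this route yields $c(G)=O(n^{\varepsilon}\log n)$, which implies the printed bound $O(n^{1-\varepsilon}\log n)$ only when $\varepsilon\le 1/2$; for $\varepsilon>1/2$ the printed bound is $O(m\log n)$, on the order of the degree, and is genuinely out of reach of Theorem~\ref{thm:vertexTrans}, since (as you note) vertex-transitive expanders have $d=O(m\log n/\log m)$ and the theorem then caps out at roughly $(n/m)\log n=n^{\varepsilon}\log n$. The resolution is almost certainly an error in the corollary's statement rather than a missing idea: the bound that actually follows is $c(G)=O(n^{\varepsilon}\log n)=O((n/m)\log n)$ (equivalently, the hypothesis should read $m=\Theta(n^{\varepsilon})$ with the conclusion as printed). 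This reading is also the only one consistent with the sentence preceding the corollary, since at $\varepsilon=0$ (linear degree) the printed conclusion $O(n\log n)$ says nothing about the soft Meyniel conjecture, whereas $O(n^{\varepsilon}\log n)=O(\log n)$ does. So do not try to close the $\varepsilon>1/2$ case by a new cop strategy; treat it as an erratum and prove the corrected statement, which your argument already does in one line.
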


The proof technique of Theorem \ref{thm:vertexTrans} can be generalized for a larger class of graphs that are less regular than vertex-transitive graphs. 
In a graph $G,$ define $\mu_1$ to be the maximum number of DLCs that contain any one vertex. 
We define two parameters $\sigma$ and $e_1$ such that most vertices are contained in between $\mu_1 /\sigma$ and $\mu_1$ DLCs, with exactly $e_1$ exceptional vertices that are contained in less than $\mu_1 /\sigma$ DLCs. These exceptional vertices can be covered by placing a cop on each of them, and so can be ignored for the proof. By allowing vertices to be contained in different numbers of DLCs, the upper bound changes to $c(G)  \leq O \big(\frac{n \sigma \log{d}}{\mu_2}\big)$. 
We also define $e_2$ and $\mu_2$ such that all DLCs contain $\mu_2$ vertices not in the $e_1$ exceptions mentioned previously, except for exactly $e_2$ exceptional DLCs that have less than $\mu_2$ such vertices. This modifies Theorem \ref{thm:vertexTrans} by allowing $\mu_2$ to be potentially much higher, as the small number of exceptions can each be covered by a set of five cops. 
As long as a number of conditions still hold for $e_1$ and $e_2$, we will have that $c(G)  \leq O \big(\frac{n \sigma \log{d}}{\mu_2}\big)$. 

\section{New families of Meyniel extremal graphs with prescribed properties} 

This section provides new constructions of Meyniel extremal families whose graphs are regular with large chromatic number, and ones with diameter at least some fixed constant. We give a construction for creating bipartite Meyniel extremal families from any given Meyniel extremal family.

\subsection{Regular with large chromatic number}
We consider a method using graph products to construct new Meyniel extremal families consisting of regular graphs with various properties such as high chromatic number or clique number. For more on graph products, the reader is directed to \cite{ik}. For graphs $G$ and $H,$ define their \emph{lexicographic product}, written $G\bullet H,$ to have vertices $V(G) \times V(H),$ and $(u,v)$ is adjacent to $(x,y)$ if $u$ is adjacent to $x$ in $G$, or $u=x,$ and $v$ is adjacent to $y$ in $H.$ We may think of $G\bullet H$ as replacing each vertex $x$ of $G$ with a copy of $H$ labeled as $H_x,$ such that if $xy\in E(G)$, then all edges are present between $H_x$ and $H_y.$ Note that the order of $G\bullet H,$ is $|V(G)||V(H)|.$ Schr\"oder \cite{schr} proved that if $c(G) \ge 2,$ then $c(G\bullet H) = c(G).$

\begin{theorem}\label{tlp}
Suppose that $\lbrace G_n \rbrace_{n\in I}$ is a Meyniel extremal family and $H$ is a fixed graph. We then have that $\lbrace G_n \bullet H \rbrace_{n\in I}$ is a Meyniel extremal family.
\end{theorem}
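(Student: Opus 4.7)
The plan is to apply Schr\"oder's identity $c(G\bullet H) = c(G)$ (valid when $c(G)\ge 2$) and then rewrite the resulting lower bound in terms of the order of $G_n\bullet H$. Since $H$ is a fixed graph, its order $h = |V(H)|$ is a positive constant independent of $n$, and the graph $G_n\bullet H$ has exactly $nh$ vertices, so the reindexed family is indexed by $I' = \{\,nh : n \in I\,\}$.

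First I would fix the constant $d>0$ guaranteed by the assumption that $\{G_n\}_{n\in I}$ is Meyniel extremal, so that $c(G_n)\ge d\sqrt{n}$ for all $n\in I$. Since $d\sqrt{n}\to\infty$, for all but finitely many $n\in I$ we have $c(G_n)\ge 2$, and Schr\"oder's theorem then yields
\[
c(G_n\bullet H) \;=\; c(G_n) \;\ge\; d\sqrt{n} \;=\; \frac{d}{\sqrt{h}}\,\sqrt{nh}.
\]
Setting $d' = d/\sqrt{h}>0$ gives the desired Meyniel extremal bound for the graph of order $nh$.

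To handle the finitely many exceptional $n\in I$ for which $c(G_n)$ might drop below $2$ (so that Schr\"oder's identity does not directly apply), I would simply shrink the constant: since there are only finitely many such $n$, we can choose a constant $d''\in(0,d']$ small enough that $c(G_n\bullet H)\ge 1 \ge d''\sqrt{nh}$ holds on this finite set, while also $d''\sqrt{nh}\le d'\sqrt{nh}$ on the cofinite set. Thus $c(G_n\bullet H)\ge d''\sqrt{nh}$ for every $n\in I$, confirming that $\{G_n\bullet H\}_{n\in I}$ is Meyniel extremal.

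There is really no substantial obstacle: the argument is essentially just an invocation of Schr\"oder's theorem together with the observation that multiplying the vertex count by a constant only changes the Meyniel extremal constant by a multiplicative factor of $1/\sqrt{h}$. The only subtlety worth flagging in the write-up is the mild bookkeeping needed when $c(G_n) = 1$, which happens for only finitely many indices and is absorbed into the constant.
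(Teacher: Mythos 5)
Your proof is correct and follows essentially the same route as the paper: invoke Schr\"oder's identity $c(G_n\bullet H)=c(G_n)$ and absorb the factor $|V(H)|$ into the constant, giving $D'=D/\sqrt{|V(H)|}$. Your extra bookkeeping for the finitely many indices with $c(G_n)<2$ (where Schr\"oder's hypothesis fails) is a point the paper's proof silently skips, so it is a welcome refinement rather than a deviation.
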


\begin{proof} Suppose that $|V(G_n)| = n,$ $|V(H)|=m,$ and let $D>0$ be a constant such that $c(G_n) \ge D\sqrt{n}$ for all $n$. We then have that $c(G_n \bullet H) \ge D\sqrt{n}$ and the order of $G_n\bullet H$ is $nm.$

Hence, $c(G_n \bullet H) \ge D'\sqrt{|V(G_n\bullet H)|},$ with $D' =D/\sqrt{m}.$ \end{proof}

\begin{corollary}
For an integer $t \ge 1,$ there exist Meyniel extremal families containing graphs that are regular and with clique and chromatic number at least $t.$
\end{corollary}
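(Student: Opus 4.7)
The plan is to take a starting Meyniel extremal family that is already regular, and lexicographically multiply each member by a clique $K_t$. For the starting family, the incidence graphs of projective planes work well: for each prime power $q$, the incidence graph has order $2(q^2+q+1)$ and is $(q+1)$-regular, and forms a Meyniel extremal family (as noted in the introduction). Call this family $\{G_n\}_{n\in I}$.

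Fix $t\ge 1$ and let $H=K_t$. By Theorem~\ref{tlp}, the family $\{G_n\bullet K_t\}_{n\in I}$ is a Meyniel extremal family. I would then verify the two structural properties. First, regularity: if $G_n$ is $k$-regular, then every vertex $(u,v)\in V(G_n\bullet K_t)$ has exactly $t-1$ neighbors in the copy $K_{t,u}$ together with $tk$ neighbors coming from $k\cdot t$ vertices in the copies $K_{t,u'}$ for $u'\in N_{G_n}(u)$; hence $G_n\bullet K_t$ is $(tk+t-1)$-regular. Second, the clique and chromatic numbers: picking any vertex $u\in V(G_n)$, the copy $K_{t,u}$ is an induced $K_t$-subgraph of $G_n\bullet K_t$, so $\omega(G_n\bullet K_t)\ge t$, and therefore $\chi(G_n\bullet K_t)\ge \omega(G_n\bullet K_t)\ge t$.

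There is no real obstacle here; the corollary is essentially a packaging of Theorem~\ref{tlp} together with the trivial observations that lexicographic multiplication by $K_t$ preserves regularity and creates a $K_t$-subgraph. The one small point worth being careful about is that the starting family must itself be regular (so that the lexicographic product is regular), which is why the projective plane incidence graphs are a convenient choice; any regular Meyniel extremal family would serve equally well.
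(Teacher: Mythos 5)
Your proposal is correct and follows the same route as the paper: apply Theorem~\ref{tlp} with $H=K_t$ and the incidence graphs of projective planes as the starting family, then observe that $K_t$ appears as an induced subgraph (forcing clique and chromatic number at least $t$) and that the product of a regular graph with $K_t$ is regular. Your extra verification of the degree $(tk+t-1)$ is a correct elaboration of a point the paper leaves implicit.
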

\begin{proof} Apply Theorem~\ref{tlp} with $H=K_t$, the complete graph of order $t$, and $\lbrace G_n \rbrace_{n\in I}$ the family of incidence graphs of projective planes. As $H$ is an induced subgraph of $G_n \bullet H$, the result follows.
\end{proof}

With our approach, we may also find Meyniel extremal graphs with bounded clique number and chromatic number at least a fixed constant: choose $H$ to have sufficiently large girth and chromatic number. By taking $H$ to be a graph with no edges, we may also find Meyniel extremal families of regular graphs with independence number larger than any fixed constant.

\subsection{Large diameter}

In most cases of Meyniel extremal families, the graphs are of small diameter. For example, the incidence graphs of projective planes have diameter 3, while polarity graphs have diameter $2$. In this subsection, we find Meyniel extremal families of graphs with any constant even diameter by using a variation of incidence graphs of projective planes.   

We specify two parameters, $q$ a prime power and $m$ a positive integer. 
Let $(X,\mathcal{B})$ be a projective plane of order $q$, which is known to exist for such $q$. For each block $B \in \mathcal{B}$ of cardinality $q+1$, we define $B',B''$, each of cardinality $\frac{q+1}{2}$, such that $B = B'\cup B''$. 
Write the corresponding blocksets as $\mathcal{B}'$ and $\mathcal{B}''$. 

We define a tripartite graph on vertex set $X\cup \mathcal{B}' \cup \mathcal{B}''$, and where an edge connects $u\in X$ to $v \in \mathcal{B}'\cup \mathcal{B}''$ if $u \in v$. 
Orient a cycle $C_m$ such that each vertex has in-degree and out-degree $1$. 
Construct a \emph{blow up} of $C_m$, say $H$, where each vertex $v$ of $C_m$ is replaced with $q^2+q+1$ vertices $\{(v,B) : B \in \mathcal{B}\}$ and each edge $e$ of $C_m$ is replaced with $q^2+q+1$ vertices $\{(e,x) : x \in X\}$. 
Vertices $(u,B)$ and $(e,x)$ of $H$ are adjacent if $e$ is an out-edge of $u$, $B\in \mathcal{B}'$ and $x \in B$; or if $e$ is an in-edge of $u$, $B\in \mathcal{B}''$ and $x \in B$. 
The resulting graph is denoted by $\mathrm{BF}(q,m)$. 

\begin{theorem}\label{ttt}
Let $q$ be a prime power and $m$ a positive integer. The graph $\mathrm{BF}(q,m)$ is a bipartite graph with the following properties:
\begin{enumerate}
\item order $2(q^2+q+1)m$ and $(q^2+q+1)(q+1)m$ edges;
\item $C_4$-free; 
\item diameter $2m$;
\item $(q+1)$-regular; and 
\item cop number at least $q+1$. 
\end{enumerate}
\end{theorem}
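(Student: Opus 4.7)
The plan is to verify properties (1), (4), (2), and (5) by direct computation, and to devote most of the effort to the diameter claim (3). The bipartition of $\mathrm{BF}(q,m)$ into cycle-vertex-copies $\{(v,B)\}$ and cycle-edge-copies $\{(e,x)\}$ is immediate from the construction, each part has $m(q^2+q+1)$ elements, and the order in (1) follows. For (4), each cycle-vertex-copy $(v,B)$ has $|B'|=(q+1)/2$ neighbors along the out-edge at $v$ and $|B''|=(q+1)/2$ along the in-edge, for total degree $q+1$; each cycle-edge-copy $(e,x)$ picks up exactly one neighbor per block through $x$ since $B=B'\cup B''$ is a partition, again yielding degree $q+1$. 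The edge count in (1) then follows from the handshake lemma.

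For (2), any $4$-cycle in the bipartite graph has the form $(v_1,B_1) - (e_1,x_1) - (v_2,B_2) - (e_2,x_2) - (v_1,B_1)$ with $e_1,e_2$ both edges of $C_m$ joining $v_1$ and $v_2$. For $m \geq 3$ this forces $e_1 = e_2$, and the adjacency rules require $x_1,x_2 \in B_1' \cap B_2''$ (up to the choice of halves determined by the orientation); since $|B_1 \cap B_2| \leq 1$ by the projective plane property, we get $x_1 = x_2$, a contradiction. The case $m=2$ is handled similarly by a brief orientation-tracking argument that rules out the two candidate half-block intersections simultaneously. Property (5) then follows immediately: $\mathrm{BF}(q,m)$ is bipartite and $C_4$-free, so its girth is at least $6$, and Corollary~\ref{cor11} gives $c(\mathrm{BF}(q,m)) \geq \delta(\mathrm{BF}(q,m)) = q+1$.

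Most of the effort goes into property (3). For the upper bound, I would show that any two vertices of $\mathrm{BF}(q,m)$ can be joined by a path of length at most $2m$ obtained by traversing around $C_m$, using the half-block size $(q+1)/2$ to guarantee enough choices of intermediate points and blocks to reach a prescribed target block at the terminal cycle vertex. For the lower bound, I would exhibit a pair of cycle-vertex-copies $(v,B_1)$ and $(v,B_2)$ at a common cycle vertex whose unique common point $p=B_1 \cap B_2$ is ``mismatched'' with the partition, in the sense that $p \in B_1' \cap B_2''$; then every short back-and-forth path at $v$ is blocked by the half-block incidence rule, and the orientation of $C_m$ forces any path between them to travel essentially once around the cycle, giving distance $2m$.

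The main obstacle is the diameter computation. The lower bound requires a careful case analysis of how the mismatched half-block partition propagates along any alternating path, ruling out shortcuts of length smaller than $2m$; the upper bound in turn needs a uniform reachability argument that a single traversal of $C_m$ provides access to every possible target block at the endpoint, which is where the relatively large half-block size $(q+1)/2$ should give the needed flexibility.
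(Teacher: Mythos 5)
Your handling of items (1), (2), (4), and (5) matches the paper's proof: direct counting for the order, degrees, and edge count, a block-intersection argument for $C_4$-freeness, and an application of Corollary~\ref{cor11} with $\delta=q+1$ for the cop number. Those parts are sound (the paper phrases the $C_4$ argument as a pullback to a $C_4$ in the incidence graph of the projective plane, which is the same computation).

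The gap is item (3), which you rightly flag as the hard part but do not prove, and whose proposed lower-bound mechanism is incorrect. You claim that for a ``mismatched'' pair $(v,B_1),(v,B_2)$ with $p=B_1\cap B_2\in B_1'\cap B_2''$, the orientation of $C_m$ forces any connecting path to travel once around the cycle. But $\mathrm{BF}(q,m)$ is undirected; the orientation only selects which half-block governs each incidence, and nothing prevents a walk from immediately backtracking along the subdivision of $C_m$. Concretely, a length-$4$ path $(v,B_1)-(e,a)-(w,B_3)-(e,b)-(v,B_2)$ through the out-edge $e=vw$ exists as soon as some block $B_3$ meets $B_1'$ in a point of $B_3''$ and meets $B_2'$ in a point of $B_3''$; there are on the order of $((q-1)/2)^2$ candidate blocks $B_3$ (one for each pair $a\in B_1'\setminus\{p\}$, $b\in B_2'\setminus\{p\}$), and since the halvings $B=B'\cup B''$ are arbitrary, such a $B_3$ will generally exist. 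So your mismatched pair typically has distance $4$, not $2m$, and the lower bound of $2m$ does not follow from this construction. What you do get for free is a diameter lower bound of $m$: the map $(v,B)\mapsto v$, $(e,x)\mapsto e$ is a graph homomorphism onto the subdivision $C_{2m}$ of $C_m$, so distances in $\mathrm{BF}(q,m)$ dominate the corresponding distances in $C_{2m}$, whose diameter is $m$. That weaker bound is all that is needed for the corollary following the theorem (regular Meyniel extremal families of diameter at least any prescribed $d$). To be fair, the paper itself disposes of item (3) in a single unargued sentence, so you are not missing a detailed argument that the paper supplies; but as written your proposal establishes neither the upper bound nor the lower bound of $2m$, and the specific route you propose for the lower bound fails.
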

\begin{proof}
For each vertex and each edge in $C_m$, $q^2+q+1$ vertices are created, giving $(q^2+q+1)(|V(G)|+|E(G)|)$ vertices in the blow up. 
A copy of the tripartite graph replaces each edge in $G$, and each tripartite graph has $2(q^2+q+1)$ block vertices, each of degree $(q+1)/2$, giving $(q^2+q+1)(q+1)m$ edges in the constructed graph. Hence, item (1) holds.

The tripartite graph that replaces each edge is $C_4$-free, since the incidence graph of the projective plane was $C_4$-free. 
If a $C_4$ existed between two tripartite graphs that replaced two adjacent edges, then the incidence graph of the projective plane would contain a $C_4$, giving a contradiction. Item (2) holds.
 
Each vertex that was the result of blowing up a vertex of $C_m$ has degree $2 \left(\frac{q+1}{2}\right)=q+1$, and each vertex that was the result of blowing up an edge of $C_m$ has degree $q+1$. The diameter of the graph is twice that of $C_m$. Items (3) and (4) hold.
Item (5) follows by Corollary~\ref{cor11}, and the theorem follows.
\end{proof}

The following corollary follows from Theorem~\ref{ttt}.

\begin{corollary}
For every positive integer $d\ge 3$ there exists a Meyniel extremal family whose graphs are regular and have diameter at least $d$. 
\end{corollary}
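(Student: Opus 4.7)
The plan is to instantiate Theorem~\ref{ttt} with a carefully chosen constant $m$ and let $q$ range over prime powers to produce the desired family. Fix $d\ge 3$ and set $m = \lceil d/2\rceil$, so that $m\ge 2$ is a constant depending only on $d$. Let $I$ be the set of prime powers, and for each $q\in I$ define the graph $H_q = \mathrm{BF}(q,m)$. Its order is $n(q) = 2m(q^2+q+1)$, and since $q^2+q+1$ is strictly increasing in $q$, distinct prime powers yield distinct orders, so reindexing $H_q$ by its order $n$ gives a well-defined family $\{H_n\}_{n\in I'}$ with $I' = \{n(q) : q \in I\}$ infinite.

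By Theorem~\ref{ttt}, each $H_n$ is $(q+1)$-regular, has diameter $2m \ge d$, and satisfies $c(H_n)\ge q+1$. Since $m$ is a fixed constant, we have $\sqrt{n} = \sqrt{2m(q^2+q+1)} = \Theta(q)$; in particular there is a constant $D = D(d) > 0$ such that $q+1 \ge D\sqrt{n}$ for all prime powers $q$. Hence $c(H_n)\ge D\sqrt{n}$ for every $n\in I'$, which witnesses that $\{H_n\}_{n\in I'}$ is Meyniel extremal.

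It remains only to observe that the two prescribed structural properties hold for every member of the family: regularity is clause (4) of Theorem~\ref{ttt}, and the diameter bound $2m \ge d$ is immediate from clause (3) together with the choice of $m$. There is no genuine obstacle here; the work was already done in establishing Theorem~\ref{ttt}, and this corollary is essentially a bookkeeping step packaging the properties listed there into a family as $q$ varies. The only subtlety worth mentioning is the need to verify that $q+1 = \Theta(\sqrt{n})$ once $m$ is frozen as a constant, which is what allows the cop-number lower bound $q+1$ to translate into the $\Theta(\sqrt{n})$ scaling required by the definition of Meyniel extremal.
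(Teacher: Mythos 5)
Your proposal is correct and is precisely the intended derivation: the paper states the corollary as an immediate consequence of Theorem~\ref{ttt}, and your choice $m=\lceil d/2\rceil$ with $q$ ranging over prime powers, together with the observation that $q+1=\Theta(\sqrt{n})$ for fixed $m$, is exactly the bookkeeping the authors leave implicit. No gaps.
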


\subsection{Bipartite Meyniel extremal graphs}

Let $G$ be graph that is not bipartite. The \emph{categorical product} $G \times K_2$ has vertex set $\{(v,a) : v\in V(G), a\in V(K_2)\}$ and an edge between $(v,a)$ and $(v',a')$ when $vv' \in E(G)$ and $aa' \in E(K_2)$.  
The graph $G\times K_2$ is called the \emph{bipartite double cover} or \emph{Kronecker cover} of $G$, and is denoted by $B(G).$ Note that $B(G)$ has twice the number of vertices as $G$. 
The degree of vertex $(v,a)$ in $B(G)$ is the same as the degree of $v$ in $G$. Further, $B(G)$ is connected as $G$ contains at least one cycle of odd length (since it is not bipartite), and is bipartite. 

The following theorem provides bounds on the cop number of a bipartite double cover.

\begin{theorem} \label{thm:BlowUp}
For a graph $G$, we have that \[c(G) \leq c(B(G)) \leq 2c(G).\] 
\end{theorem}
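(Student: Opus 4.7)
The plan is to exploit the projection $\pi\colon V(B(G))\to V(G)$ defined by $\pi(v,a)=v$, which is a graph homomorphism and restricts to a bijection $\{(u,1-a):u\in N_G(v)\}\to N_G(v)$ on each neighborhood. In particular every edge of $B(G)$ flips the second coordinate, so any walk in $G$ starting at $v$ has exactly two lifts in $B(G)$, each determined by the choice of starting fibre point. I will treat the two inequalities independently, using this lifting property.

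For the lower bound $c(G)\le c(B(G))$, the plan is to argue the contrapositive: if the robber wins on $G$ against $k$ cops, she also wins on $B(G)$ against $k$ cops. First I would fix such a winning robber strategy on $G$. Then on $B(G)$ the robber tracks the cops' positions $\tilde c_1^{(t)},\dots,\tilde c_k^{(t)}$, their projections $c_i^{(t)}=\pi(\tilde c_i^{(t)})$, and runs her $G$-strategy against the projected cops to obtain a target $r^{(t)}\in V(G)$ each round. The initial lift $\tilde r^{(0)}$ may be taken to be either preimage of $r^{(0)}$ (since $r^{(0)}\neq c_i^{(0)}$ in $G$, neither preimage is a cop), and every subsequent lift is forced: when $r^{(t+1)}\sim r^{(t)}$ in $G$, there is a unique neighbor of $\tilde r^{(t)}$ in $B(G)$ projecting to $r^{(t+1)}$, and when $r^{(t+1)}=r^{(t)}$ the robber passes. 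Since $\pi$ is a homomorphism the projected cop moves are legal in $G$, so the strategy guarantees $r^{(t)}\neq c_i^{(t)}$ for all $t,i$, whence $\tilde r^{(t)}\neq\tilde c_i^{(t)}$ and the robber evades capture.

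For the upper bound $c(B(G))\le 2c(G)$, I would deploy $2k$ cops on $B(G)$, with $k=c(G)$, in \emph{pairs} shadowing a fixed winning cop strategy on $G$. The invariant to maintain is that the $2k$ cops always occupy $\bigcup_{i=1}^{k}\{(v_i,0),(v_i,1)\}$, where $(v_1,\dots,v_k)$ is the current state of the simulated cops on $G$. This is preserved under play: a pass in $G$ corresponds to both members of a pair passing, while a move $v_i\to v_i'$ in $G$ is realized by the $B(G)$-edges $(v_i,0)(v_i',1)$ and $(v_i,1)(v_i',0)$, which restore $\{(v_i',0),(v_i',1)\}$. The simulated game uses the projected robber $r=\pi(\tilde r)$, whose moves are legal in $G$, so the fixed strategy captures $r$ in finitely many rounds; at that instant both $(r,0)$ and $(r,1)$ are occupied, so the true robber is caught.

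The only real subtlety in either direction is the bookkeeping of the parity flip forced by each edge of $B(G)$: once the lift in the lower bound and the pair invariant in the upper bound are set up to respect this parity, both arguments are routine. The most delicate point is distinguishing passes (no flip) from genuine moves (flip) in the lift construction, but this is handled uniformly by the case split above.
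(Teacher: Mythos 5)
Your proposal is correct and follows essentially the same route as the paper's proof: for the lower bound, project the cops via $(v,a)\mapsto v$ and lift the winning robber strategy from $G$ to $B(G)$ using the parity flip, and for the upper bound, assign to each cop on $G$ a pair of cops occupying both fibre vertices so that capture of the projected robber forces capture in $B(G)$. Your write-up is somewhat more explicit about the uniqueness of the lift and the pass/move case split, but the underlying argument is the same.
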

\begin{proof}
We begin by showing the lower bound.  When playing Cops and Robbers on $B(G)$, the robber can apply the mapping $f:B(G) \rightarrow V(G)$ defined as $f(v,a)=v$ to the location of each cop to convert the game to a game on $G$. 
The robber can then use the strategy defined on $G$ to avoid the robber. Label the vertices of $K_2$ by $0$ and $1.$
If this strategy tells the robber to move from vertex $u$ to $v$ in $G$, then the robber moves from $(u,a)$ to $(v,a+1) \pmod{2}$ in $B(G)$. 
The robber in $B(G)$ is captured only if the robber in $G$ is captured. 
Therefore, we have that $c(B(G)) \geq c(G)$. 

Now we show the upper bound. Suppose that cops $C_1, C_2,\ldots, C_k$ capture the robber on $G$. 
To each cop on $G$, we associate two cops playing on $c(B(G))$. 
During each round and for $1\le i \le k$, the two cops $C_{i,0},C_{i,1}$ associated to $C_i$ play on the vertices $(C_i,0)$ and $(C_i,1)$. 
The robber is captured by the cops $C_{i,0},C_{i,1}$ on $c(B(G))$ when the robber is captured on $G$ by the cops $C_i$. 
\end{proof}

We have the following immediate corollary, which gives a strong case to focus on the cop number of bipartite graphs. 

\begin{corollary}\label{cor1}
If there exists a family of connected graphs $\lbrace G_n \rbrace_{n\in I}$ with $c(G_n) = \Theta(n^{1-\epsilon})$, where $0 \leq \epsilon <1 $, and $|V(G_n)| = \Theta(n)$, then there exists a family of bipartite connected graphs $\lbrace G_n' \rbrace_{n\in I}$ with $c(G_n') = \Theta(n^{1-\epsilon})$ and $|V(G_n')| = \Theta(n)$.
\end{corollary}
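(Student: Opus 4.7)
The plan is to derive this corollary by direct application of Theorem~\ref{thm:BlowUp}, using a single case distinction to handle an edge case in the bipartite double cover construction. For each $n \in I$, I would define
\[
G_n' = \begin{cases} G_n & \text{if } G_n \text{ is bipartite,} \\ B(G_n) & \text{otherwise.} \end{cases}
\]
The task is then to check, in both cases, that $G_n'$ is bipartite and connected, that $|V(G_n')| = \Theta(n)$, and that $c(G_n') = \Theta(n^{1-\epsilon})$.

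First I would handle the structural conditions. In the first branch, $G_n' = G_n$ is connected by hypothesis and bipartite by construction. In the second branch, $B(G_n)$ is bipartite by the definition of the categorical product with $K_2$, and is connected precisely because $G_n$ is non-bipartite and therefore contains an odd cycle; this was observed in the paragraph preceding Theorem~\ref{thm:BlowUp}. The order satisfies $|V(G_n')| \in \{|V(G_n)|, 2|V(G_n)|\}$, and since $|V(G_n)| = \Theta(n)$ in either case $|V(G_n')| = \Theta(n)$.

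For the cop number, the first branch is trivial since $G_n' = G_n$. In the second branch, Theorem~\ref{thm:BlowUp} gives the sandwich $c(G_n) \leq c(B(G_n)) \leq 2c(G_n)$, so that $c(G_n') = \Theta(c(G_n)) = \Theta(n^{1-\epsilon})$, as required. The only subtle point in the argument is the necessity of the case distinction: a naive application of the bipartite double cover to a graph that is already bipartite produces two disconnected copies of $G_n$, which would violate the connectedness requirement. Beyond recognizing this, the corollary is an immediate consequence of Theorem~\ref{thm:BlowUp} and no further computation is needed.
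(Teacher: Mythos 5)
Your proof is correct and follows the same route the paper intends: the corollary is stated as an immediate consequence of Theorem~\ref{thm:BlowUp} applied to the bipartite double cover, with the order doubling absorbed into the $\Theta(n)$ bound. Your explicit case distinction for families whose members are already bipartite (where $B(G_n)$ would be disconnected) is a point the paper glosses over, and it is handled correctly.
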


In particular, Corollary~\ref{cor1} shows that every Meyniel extremal family gives rise to a Meyniel extremal family whose members are bipartite. Further, if there is a family of connected graphs that violate Meyniel's conjecture, then we can find a family of connected bipartite graphs that violate Meyniel's conjecture.

We note that the number of $C_4$'s in $B(G)$ will be twice the number in $G$. The number of $C_6$'s in $B(G)$ will be the number of triangles in $G$ plus twice the number of $C_6$'s in $G$. Hence, if $G$ is $C_4$-free, then so is $B(G)$, and if $G$ is triangle-free and $C_6$-free, then $B(G)$ is $C_6$-free. 

Our approach using the bipartite double cover also yields another short proof of the result first proven in \cite{bb}.
\begin{corollary}
If $G$ is a $C_4$-free graph, then $c(G) \geq \delta(G)/2.$
\end{corollary}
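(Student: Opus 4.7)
The plan is to derive the corollary by passing to the bipartite double cover $B(G)$ and combining Corollary~\ref{cor11} (the girth-5 lower bound $c(H)\ge\delta(H)$) with the upper half of Theorem~\ref{thm:BlowUp} (the inequality $c(B(G))\le 2c(G)$). The intuition is that the bipartite double cover kills all odd cycles while preserving the $C_4$-freeness, and hence manufactures a girth-at-least-5 graph out of any $C_4$-free graph, on which the stronger Aigner--Fromme-type bound applies.

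First I would split on whether $G$ is bipartite. If $G$ is bipartite and $C_4$-free, then $G$ itself has girth at least $6$, so Corollary~\ref{cor11} gives $c(G)\ge\delta(G)\ge\delta(G)/2$ directly, with no double cover needed. This case is not the interesting one, but it must be addressed because $B(G)$ is disconnected when $G$ is bipartite.

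The main case is when $G$ is not bipartite. Then $B(G)$ is connected (as the excerpt explicitly notes) and bipartite, so in particular triangle-free. By the observation immediately preceding the corollary, $B(G)$ inherits $C_4$-freeness from $G$, so $B(G)$ has girth at least $5$. Moreover, each vertex of $B(G)$ has the same degree as its image in $G$, so $\delta(B(G))=\delta(G)$. Applying Corollary~\ref{cor11} to $B(G)$ yields
\[
c(B(G))\ \ge\ \delta(B(G))\ =\ \delta(G).
\]
Combining this with the upper bound $c(B(G))\le 2c(G)$ from Theorem~\ref{thm:BlowUp} gives
\[
2c(G)\ \ge\ c(B(G))\ \ge\ \delta(G),
\]
and dividing by $2$ finishes the proof.

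There is no real obstacle: every ingredient is stated earlier in the paper, and the argument is a clean composition. The only point that needs care is the bipartite/nonbipartite dichotomy, since the ``short proof'' advertised really works only after one checks that $B(G)$ is connected (otherwise Theorem~\ref{thm:BlowUp}, as stated with a single cop number, is invoked on a disconnected graph); handling the bipartite case by direct appeal to Corollary~\ref{cor11} sidesteps this issue entirely.
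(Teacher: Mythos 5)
Your proof is correct and follows essentially the same route as the paper's: pass to the bipartite double cover $B(G)$, apply the girth lower bound $c(B(G))\ge\delta(B(G))=\delta(G)$, and transfer back via $c(B(G))\le 2c(G)$ from Theorem~\ref{thm:BlowUp}. The only difference is your explicit bipartite/non-bipartite case split, which handles the disconnectedness of $B(G)$ when $G$ is bipartite (where $G$ itself already has girth at least $6$ and Corollary~\ref{cor11} applies directly) --- a subtlety the paper's one-line proof silently elides, so your version is if anything slightly more careful.
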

\begin{proof}
If $G$ is $C_4$-free, then $B(G)$ has girth at least $6$, and so has cop number $c(B(G))\geq \delta(B(G)) = \delta(G)$. 
By Theorem~\ref{thm:BlowUp}, this gives that $\delta(G) \leq 2c(G)$, and the result follows. 
\end{proof}

\section{Further directions}

All known Meyniel extremal families, such as the incidence graphs of projective planes and polarity graphs, rely on properties of degrees and on forbidding subgraphs. We demonstrated that the minimum degree of graphs in Meyniel extremal families may vary widely: Corollary~\ref{cor:vec} showed that there exist classes of Meyniel extremal families containing graphs on $n$ vertices with $\Theta(\sqrt{n})$ vertices of constant degree. In our constructions of Meyniel extremal families in Sections 2 and 5, both the maximum degree and the average degree remain high. 
In the recent paper~\cite{hos}, it was shown that for every $\varepsilon > 0$, there exists a graph with maximum degree $3$ on $n$ vertices with cop number $\Omega(n^{1/2-\varepsilon})$. Their methods use a degree-reducing technique that seems to break down when trying to obtain a Meyniel extremal family. 

Our discussion suggests the following.
\vspace{.1in}

\noindent \textbf{Maximum degree conjecture}: Every Meyniel extremal family contains graphs with maximum degree $\omega(1)$.
\vspace{.1in}

If the Maximum Degree Conjecture is true, then there is no Meyniel extremal family of subcubic graphs (that is, graphs whose degrees are at most 3). Corollary~\ref{cor:vec} as well as Lemma~\ref{lem:newlb-girth} and ~\ref{lem:newlb-K2t} suggests that the number of vertices of certain degrees must be controlled to maintain Meyniel extremality. We also conjecture that the average degree must be unbounded, which would imply the previous conjecture.
\vspace{.1in}

\noindent \textbf{Average degree conjecture}:
Every Meyniel extremal family contains graphs with average degree $\omega(1)$.
\vspace{.1in}

Many of the known examples do not deviate from an average degree of $\Theta(\sqrt n)$, and our current efforts seem to suggest that this is an important threshold. The analysis of the cop number of the binomial random graph $G(n,p)$ in \cite{lp} suggests that there are examples with lower average degree: their results may be extended to when the average degree is $n^{1/(2k) + o(1)}$ for natural numbers $k$ and achieve a cop number of $\Theta(\sqrt n)$, with the possible additional factor of $\log^{O(1)} n$. An open problem is to find a Meyniel extremal family containing graphs with average degree $o(\sqrt n)$.

Forbidding too many short cycles is not possible if the degree is too high. A Moore bound argument gives that if a graph $G$ has $n$ vertices and $\delta(G) = \Theta(\sqrt{n})$, then $G$ has girth at most $6$. We must therefore search for graphs with smaller minimum degree to find Meyniel extremal families whose graphs do not contain any $C_6$. Finding a Meyniel extremal family whose members are $C_6$-free remains an open problem. Interestingly, one of the best-known examples of a Meyniel extremal family, the incidence graphs of projective planes, has the largest possible number of $C_6$'s in a $C_4$-free, balanced bipartite graph; see~\cite{fio}.

\section{Statements and Declarations}
The authors were supported by NSERC. The authors have no relevant financial or non-financial interests to disclose. All authors contributed equally to the paper.

\end{document}